\newtheorem{theorem}{Theorem}[section]
\newtheorem{lemma}[theorem]{Lemma}
\theoremstyle{definition}
\newtheorem{definition}[theorem]{Definition}
\newtheorem{example}[theorem]{Example}
\theoremstyle{remark}
\newtheorem{remark}[theorem]{Remark}
\numberwithin{equation}{section}
\begin{document}
\setcounter{page}{1}

\title[Estimates for the full maximal function on graded Lie groups]{Estimates for the full maximal function on graded Lie groups}

\author[D. Cardona]{Duv\'an Cardona}
\address{
  Duv\'an Cardona:
  \endgraf
  Department of Mathematics: Analysis, Logic and Discrete Mathematics
  \endgraf
  Ghent University, Belgium
  \endgraf
  {\it E-mail address} {\rm duvanc306@gmail.com, duvan.cardonasanchez@ugent.be}
  }

\thanks{ The author was supported by the Research Foundation-Flanders
(FWO) under the postdoctoral
grant No 1204824N.  The author was supported  by the FWO  Odysseus  1  grant  G.0H94.18N:  Analysis  and  Partial Differential Equations and by the Methusalem programme of the Ghent University Special Research Fund (BOF)
(Grant number 01M01021). 
}

     \keywords{Full maximal function. Graded Lie group. Rockland operator. Representation theory. Harmonic analysis}
     \subjclass[2020]{42B25, 22E27.}

\begin{abstract} On $\mathbb{R}^n,$ a classical result due to Bourgain establishes the restricted weak $(\frac{n}{n-1},1)$ inequality for the full maximal function $M_F^{d\sigma}$  associated to the spherical averages. In this work we present an extension to Bourgain's result on graded Lie groups for a family of full maximal operators. We formulate this extension using the group Fourier transform of the measures under consideration and the symbols of (positive Rockland operators which are) positive left-invariant hypoelliptic partial differential operators on graded Lie groups. 
\end{abstract} 

\maketitle

\allowdisplaybreaks
\tableofcontents

\section{Introduction}
The aim of this work is to formulate certain boundedness properties for the full maximal function on graded Lie groups. Our main goal is to extend the restricted weak $(\frac{n}{n-1},1)$ inequality due to Bourgain  \cite{Bourgain1985} for the full maximal function $M_F^{d\sigma}$ associated to the spherical averages, to a family of full maximal functions on graded Lie groups.  As a consequence of interpolating our (restricted weak estimate in the form of a certain) $L^{p,1}\rightarrow L^{p,\infty}$ boundedness theorem with the corresponding $L^\infty$-boundedness result we immediately obtain the $L^{p,q}$-boundendess of these maximal operators on some Lorentz spaces.  Consequently, when $p=q$ we provide an extension to the $L^p$-boundedness result of $M_F^{d\sigma}$ due to Stein \cite{Stein1976} for the spherical averages when $p>\frac{n}{n-1},$ to the setting of graded Lie groups. 

Our main result is Theorem \ref{Maximal:Function:Graded} below. Our approach will be to identify the information on the problem coming from both, harmonic analysis and representation theory of nilpotent Lie groups. Since we consider graded Lie groups, several techniques from operator theory will be used to deal with the spectral calculus of Rockland operators (which are positive left-invariant hypoelliptic partial differential operators). 

\subsection{Short review of the Euclidean setting} To present our main result we briefly discuss the Euclidean setting. 
Consider the full maximal function on $\mathbb{R}^n$
\begin{equation}\label{Full}
   {M}^{d\sigma}_{F}f(x)=\sup_{r>0}\left|\smallint\limits_{\mathbb{S}^{n-1}}f(x-ry)d\sigma(y)\right|,\,n\geq 2,
\end{equation}where $d\sigma$ is the surface measure on the sphere $\mathbb{S}^{n-1}.$ For $n\geq 3,$  Stein in \cite{Stein1976} proved the boundedness of
\begin{equation}\label{Stein:1976}
{M}^{d\sigma}_{F}:L^p(\mathbb{R}^n)\rightarrow L^p(\mathbb{R}^n)
\end{equation}
for all $p>\frac{n}{n-1},$ and  Bourgain in \cite{Bourgain1985} also proved that
\begin{equation}\label{Bourgain:restricted}
    {M}^{d\sigma}_{F}:L^{\frac{n}{n-1},1}(\mathbb{R}^n)\rightarrow L^{\frac{n}{n-1},\infty}(\mathbb{R}^n)
\end{equation}
is bounded. Here we recall that the boundedness property in \eqref{Bourgain:restricted} is equivalent to the restricted weak $(\frac{n}{n-1},1)$ boundeness of $M_{F}^{d\sigma}.$ We also observe that Stein in \cite{Stein1976} proved that $M_F^{d\sigma}$ is unbounded
when $p\leq \frac{n}{n-1},$ and $n \geq  2$. Then, Bourgain \cite{Bourgain1986} proved  the boundedness of $M_{F}^{d\sigma},$ when $n=2.$ More precisely, Bourgain showed that $$ {M}^{d\sigma}_{F}:L^p(\mathbb{R}^2)\rightarrow L^p(\mathbb{R}^2) $$ is bounded for all $p>2.$ In view of the boundedness theorem in \eqref{Bourgain:restricted} it is also natural to ask if $M_{F}^{d\sigma}$ satisfies restricted weak inequalities when $n=2,$ as remarked by Wolff \cite{Wolff1996}. Then, solving  Wolff's question in  \cite{STW2003}, Seeger, Tao, and Wright have disproved the restricted weak type (2,2) inequality for $n=2.$ 

Several other
authors have also studied the spherical maximal function. It would be difficult to review the complete literature here, especially with the recent developments in the multilinear setting. Nevertheless, we provide some references, for instance, we refer to  Calder\'on \cite{Calderon1979}, Carbery \cite{Carbery}, Cowling and Mauceri \cite{CowlingMauceri},  Dosidis and Grafakos \cite{DosidisGrafakos}, Duoandikoetxea and Vega \cite{DuoandikoetxeaVega1996}, Greenleaf \cite{Greenleaf1981}, Mockenhaupt, Seeger, and Sogge \cite{MockenhauptSeegerSogge1992} and  Schlag \cite{Schlag1998}. Further references about the subject will be given in Subsection \ref{References:section}. 

Having presented a short review of the Euclidean setting, in the next section, we introduce the full maximal function in the setting of homogeneous nilpotent Lie groups to present our main theorem in Subsection \ref{Subsection:mainresult}. 
\subsection{The full maximal function $M_{F}^{d\sigma}$ and admissible measures}
We consider the full maximal function
\begin{equation}\label{defin:maximal:function}
    M_{F}^{d\sigma}f(x)=\sup_{t>0}\left|\smallint\limits_{G}f\left(x\cdot (t\cdot y)^{-1}\right)d\sigma(y)\right|,
\end{equation} associated to an arbitrary finite Borel measure $d\sigma$   with compact support on $G.$
In Definition \ref{Admissible:measure} below we describe the measures $d\sigma$ considered in this work. 

Our main motivation is to reformulate the {\it curvature conditions} available in the Euclidean setting in terms of group Fourier transform type conditions. One reason for this is that these two approaches are equivalent in the Euclidean setting, see e.g \cite[Page 622]{STW}.  Here, $\widehat{G}$ denotes the unitary dual of a graded Lie group $G,$ and  $\pi\mapsto\widehat{d\sigma}(\pi)$ denotes the operator-valued group Fourier transform of $d\sigma$ at $\pi\in \widehat{G}.$ We refer the reader to Section \ref{preliminaries} for details about the Fourier analysis on graded Lie groups.  A concise presentation about the properties of Rockland operators can be found in  \cite[Chapter 4]{FischerRuzhanskyBook}. In Section \ref{Motivation} we motivate the {\it curvature conditions} (ii) and (iii) of Definition \ref{Admissible:measure}  taking intuition from Fefferman's discussion in \cite[Page 527]{Fefferman2020:Stein:memory}.
 
\begin{definition}[Admissible measures]\label{Admissible:measure}  Let $d\sigma$ be a finite Borel measure of compact support on a graded Lie group $G$ of homogeneous dimension $Q.$ Let $\mathcal{R}$ be a positive Rockland operator on $G$ of homogeneous degree $\nu>0.$

We say that   $d\sigma$  is $(Q,Q_0,\varepsilon_0,a)$-{\it admissible} (or {\it admissible} for short) if it satisfies the topological property (i) and the ``curvature properties" (ii) and (iii) below. 
\begin{itemize}
    \item[(i)] Let $K$ be the support of $d\sigma.$ For any $\beta>0,$ and each $x\in G,$ the measure $\sigma(\mathbb{S}_\beta)$ of the set
    \begin{equation}\label{Q:o}
    \mathbb{S}_\beta(x)= \{y\in K:|xy^{-1}|\sim \beta \},
\end{equation} satisfies the growth estimate\footnote{Note that in the case where $\mathbb{S}_{\beta}(x)=\emptyset,$ $\sigma(\mathbb{S}_\beta(x))=0.$}
\begin{equation}\label{topological:condition}
 \sigma( \mathbb{S}_{\beta}(x))\leq \beta^{Q_0},
\end{equation} for some $Q_0\in \mathbb{R}$ such that  $0\leq Q_0<Q.$
\end{itemize}
\begin{itemize}
    \item[(ii)]  The group Fourier transform of $d\sigma$ satisfies the growth estimate
\begin{equation}\label{FT;Condition:Measure}
  \sup_{\pi\in\widehat{G}}\Vert\pi(\mathcal{R})^{ \frac{a}{\nu}}\widehat{d\sigma}(\pi)\Vert_{\textnormal{op}}<\infty,
\end{equation} for some order  $a>0.$
\item[(iii)] The following Tauberian  inequality
 \begin{equation}\label{Ft;Condition:derivative:measure}
        \sup_{s>0;\pi\in \widehat{G}}   \Vert  s\frac{d}{ds}\{\widehat{d\sigma}(s\cdot \pi)\}[(s\cdot \pi)(\mathcal{R})]^{\frac{a-\varepsilon_0}{\nu}}\Vert_{\textnormal{op}}<\infty,
    \end{equation} holds for some $\varepsilon_0\in \mathbb{R}$ satisfying that $a>\varepsilon_0\geq 0.$ 
\end{itemize} 
\end{definition}
We discuss the differentiability of the mapping $s\mapsto \frac{d}{ds}\{\widehat{d\sigma}(s\cdot \pi)\} ,$  in Subsection \ref{Differentiability:d:sigma}. Indeed, as expected one can define it as a densely defined operator on each representation space.

\subsection{Estimates for $M_{F}^{d\sigma}$ on graded Lie groups}\label{Subsection:mainresult} Next, we present the main result of this work, that is the extension to the (restricted weak  $(\frac{n}{n-1},1)$ inequality) $L^{\frac{n}{n-1},1}\rightarrow L^{\frac{n}{n-1},\infty}$ estimate by Bourgain \eqref{Bourgain:restricted} and the $L^p$-estimate of Stein \eqref{Stein:1976} to the setting of graded Lie groups.

\begin{theorem}\label{Maximal:Function:Graded} Let $d\sigma$ be a finite Borel measure of compact support on a graded Lie group $G$ of homogeneous dimension $Q.$ Assume that $d\sigma$ is $(Q,Q_0,\varepsilon_0,a)$-admissible (as in Definition \ref{Admissible:measure}).  Let $\mathcal{R}$ be a positive Rockland operator on $G$ of homogeneous degree $\nu>0.$ Let $c:=Q-Q_0>0,$ and let 
\begin{equation}\label{D:definition}
 D:=2a-\varepsilon_0+2c.   
\end{equation}
Then $M_{F}^{d\sigma}$ has the following continuity properties:
\begin{itemize}
    \item[(1)] $M_{F}^{d\sigma}: L^{\frac{D}{D-c},1}(G)\rightarrow L^{\frac{D}{D-c},\infty}(G) $ admits a bounded extension (equivalently, $M_{F}^{d\sigma}$ is of restricted weak  $(\frac{D}{D-c},1)$ type).
    \item[(2)] $M_{F}^{d\sigma}: L^{p}(G)\rightarrow L^{p}(G) $ is bounded for all $\frac{D}{D-c}<p\leq \infty.$
\end{itemize}    
\end{theorem}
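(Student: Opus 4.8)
The plan is to reduce the operator to a supremum of group convolutions and then run a Littlewood--Paley analysis adapted to the Rockland operator $\mathcal R$. First I would write $M_F^{d\sigma}f(x)=\sup_{t>0}|f*\sigma_t(x)|$, where $\sigma_t$ is the dilated measure defined by $\int g\,d\sigma_t=\int g(t\cdot y)\,d\sigma(y)$ and $*$ is the convolution on $G$. Fixing a dyadic spectral partition of unity $1=\sum_{j\ge 0}\psi_j(\mathcal R)$ with $\psi_j$ localising $\pi(\mathcal R)\sim 2^{j\nu}$, I set $\sigma_j:=\psi_j(\mathcal R)\sigma$ and $M_jf:=\sup_{t>0}|f*(\sigma_j)_t|$, so that $M_F^{d\sigma}f\le\sum_{j\ge 0}M_jf$. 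The piece $j=0$ is dominated pointwise by the Hardy--Littlewood maximal function on $G$ and is therefore harmless on every $L^p$, $1<p\le\infty$ (and of weak type $(1,1)$); the substance is the high--frequency sum $j\ge 1$.

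The argument rests on two quantitative estimates for the pieces $M_j$. The first is an $L^2$ decay estimate $\|M_j\|_{L^2\to L^2}\lesssim 2^{-j\alpha}$ with $\alpha:=a-\varepsilon_0/2>0$ (positive since $a>\varepsilon_0\ge 0$). To prove it I would apply the linearisation $\sup_t|g(t)|^2\le 2(\int_0^\infty|g|^2\frac{dt}{t})^{1/2}(\int_0^\infty|t\partial_t g|^2\frac{dt}{t})^{1/2}$ to $g(t)=f*(\sigma_j)_t(x)$, integrate in $x$, and invoke the Plancherel theorem on $G$. Using the homogeneity $(t\cdot\pi)(\mathcal R)=t^\nu\pi(\mathcal R)$, on the support of $\widehat{(\sigma_j)_t}$ one has $(t\cdot\pi)(\mathcal R)\sim 2^{j\nu}$, so condition (ii) gives $\|\widehat{(\sigma_j)_t}(\pi)\|_{\mathrm{op}}\lesssim 2^{-ja}$ and controls the first square function by $2^{-ja}\|f\|_2$, while condition (iii) controls the $t$--derivative square function by $2^{-j(a-\varepsilon_0)}\|f\|_2$; multiplying the two gives $2^{-j\alpha}$. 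The second estimate is a geometric weak--type bound $\|M_j\|_{L^1\to L^{1,\infty}}\lesssim 2^{jc}$ with $c:=Q-Q_0$, which is where the growth hypothesis (i) enters, controlling the distribution of the mass of $\sigma_j$ with respect to the homogeneous distance and hence the size and overlap of the averaging kernels at each scale.

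With these two estimates the endpoint statement (1) follows by testing on $f=\chi_E$ and distributing the level $\lambda$ across frequencies with geometric weights. Writing $\{M_F^{d\sigma}\chi_E>\lambda\}\subseteq\bigcup_j\{M_j\chi_E>\lambda_j\}$ with $\lambda_j=c_\delta\lambda\,2^{-\delta|j-j_*|}$ and $\sum_j c_\delta 2^{-\delta|j-j_*|}=1$, I would use Chebyshev together with the $L^2$ estimate for $j>j_*$ and the weak $(1,1)$ estimate for $j\le j_*$; choosing $0<\delta<\min(\alpha,c)$ makes both resulting series geometric, so the logarithmic loss that the critical exponent would otherwise produce is avoided, leaving $|\{M_F^{d\sigma}\chi_E>\lambda\}|\lesssim \lambda^{-2}2^{-2j_*\alpha}|E|+\lambda^{-1}2^{j_*c}|E|$. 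Optimising in $j_*$ (balancing the two terms at $2^{j_*}\sim\lambda^{-1/(c+2\alpha)}$) and using $c+2\alpha=D-c$ and $2c+2\alpha=D$ gives exactly $|\{M_F^{d\sigma}\chi_E>\lambda\}|\lesssim \lambda^{-D/(D-c)}|E|$, i.e. the restricted weak $(\frac{D}{D-c},1)$ bound, equivalently the $L^{\frac{D}{D-c},1}\to L^{\frac{D}{D-c},\infty}$ estimate. For part (2), interpolating the $L^2$ decay with the weak $(1,1)$ growth shows $\sum_j\|M_j\|_{L^p\to L^p}<\infty$ for $p_0<p<2$ (the balance exponent being precisely $p_0=\frac{D}{D-c}$), giving strong $L^p$ boundedness there; combining the endpoint bound with the trivial estimate $\|M_F^{d\sigma}\|_{L^\infty\to L^\infty}\le\|\sigma\|$ and interpolating covers the full range $\frac{D}{D-c}<p\le\infty$.

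I expect the main analytic obstacle to be the rigorous $L^2$ square--function estimate, since it requires working with the operator--valued group Fourier transform and the spectral calculus of $\pi(\mathcal R)$: one must justify the differentiability of $s\mapsto\widehat{d\sigma}(s\cdot\pi)$ as a densely defined operator (the content of the differentiability discussion announced above), separate the two terms arising when $t\partial_t$ falls on the cutoff $\psi_j$ versus on $\widehat{d\sigma}$, and manipulate operator norms under the Plancherel integral. The geometric weak $(1,1)$ bound with the sharp exponent $2^{jc}$ is a second genuinely non--trivial point, requiring one to translate the abstract growth condition (i) into kernel estimates for the Littlewood--Paley pieces on the non--commutative group. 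Once both estimates are established with their correct exponents, the endpoint summation and the interpolation producing (1) and (2) are essentially mechanical.
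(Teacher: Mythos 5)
Your proposal is correct and rests on the same two quantitative pillars as the paper---an $L^2$ square-function estimate extracted from conditions (ii)--(iii) via the linearisation $\sup_t|F|^2\le 2\bigl(\smallint_0^\infty|F|^2\tfrac{ds}{s}\bigr)^{1/2}\bigl(\smallint_0^\infty|sF'|^2\tfrac{ds}{s}\bigr)^{1/2}$ and Plancherel, and a weak $(1,1)$ kernel estimate extracted from condition (i) by domination with the Hardy--Littlewood maximal function---but it organizes them through a genuinely different decomposition. The paper does not perform a full dyadic Littlewood--Paley decomposition: it splits $d\sigma$ into exactly two pieces, $d\sigma=\Phi_\tau*d\sigma+(\delta-\Phi_\tau)*d\sigma$, using a single spectral cutoff $\phi_\tau(\mathcal R^{1/\nu})$ at a continuous scale $\tau$, proves $M_1f\lesssim\tau^{-c}\mathscr Mf$ pointwise and $\Vert M_2\Vert_{L^2\to L^2}\lesssim\tau^{a-\varepsilon_0/2}$ (including the same separation of the $s\partial_s$-derivative falling on $\widehat{d\sigma}$ versus on the cutoff that you anticipate), and then obtains the endpoint by simply adding the two distributional inequalities for $f=\chi_A$ and optimizing $\varkappa(\tau)=\tau^{-c}\lambda^{-1}+\tau^{2a-\varepsilon_0}\lambda^{-2}$ at $\tau_0\sim\lambda^{1/(2a-\varepsilon_0+c)}$. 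Because there are only two pieces, no summation over frequencies occurs and the geometric-weight distribution $\lambda_j=c_\delta\lambda 2^{-\delta|j-j_*|}$ you use to avoid the logarithmic loss is unnecessary; your $2^{-j_*}$ is precisely the paper's $\tau$, and your blocks $j\le j_*$ and $j>j_*$ aggregate to its two pieces, so the exponent arithmetic ($c+2\alpha=D-c$, $2c+2\alpha=D$ with $\alpha=a-\varepsilon_0/2$) matches the paper's exactly. What your route buys is a direct strong $L^p$ bound for $D/(D-c)<p<2$ by summing $\Vert M_j\Vert_{L^p\to L^p}$, whereas the paper gets part (2) by Hunt's Marcinkiewicz interpolation of the endpoint estimate with the trivial $L^\infty$ bound; the price is that you must establish the per-piece bound $\Vert M_j\Vert_{L^1\to L^{1,\infty}}\lesssim 2^{jc}$, which you correctly flag as nontrivial and which follows by running the paper's annular decomposition of $K$ (the sets $\mathbb S_{2^r\tau}(x)$ with $\sigma$-mass $\le(2^r\tau)^{Q_0}$) on the kernel $\psi_j(\mathcal R)\delta*d\sigma$ at scale $\tau=2^{-j}$. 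Both routes are sound; the paper's is the more economical, yours the more modular.
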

Now we briefly discuss the previous result. Its proof is presented in Section \ref{Proof}.
\begin{remark}\label{remark:interpolation}
    In Theorem \ref{Maximal:Function:Graded}, the boundedness of $M_{F}^{d\sigma}: L^{\frac{D}{D-c},1}(G)\rightarrow L^{\frac{D}{D-c},\infty}(G) ,$ at the end-point $p=\frac{D}{D-c},$ is the main estimate.  Consequently, the $L^p$-estimate in (2) follows by interpolation of (1) with the boundedness of  $M_{F}^{d\sigma}:L^\infty(G)\rightarrow L^\infty(G).$ Moreover, by the Marcinkiewicz interpolation theorem (for sub-linear operators on Lorentz spaces) due to Hunt \cite[Page 804]{Hunt1964}\footnote{A linear version of this theorem was proved by Calder\'on in  \cite{Calderon1963}.}, we have that 
    $$  \boxed{ M_{F}^{d\sigma}: L^{p,q}(G)\rightarrow L^{p,s}(G) } $$ is bounded for all $\frac{D}{D-c}<p\leq \infty,$ and for all $1\leq q\leq s\leq \infty.$
\end{remark}
\begin{remark}\label{differentiation:th} Note that we have the inequality
\begin{equation}
    \Vert M_{F}^{d\sigma} \Vert_{L^p(G)}\leq C_{p,d\sigma}\Vert f\Vert_{L^p(G)},
\end{equation} for all $\frac{D}{D-c}<p\leq \infty,$
and the set 
$$\{f\in L^p(G): \lim_{t\rightarrow 0^{+}}\smallint\limits_{G}f\left(x\cdot (t\cdot y)^{-1}\right)d\sigma(y)=f(x),\,a.e.\}$$
is closed in $L^p(G),$ see e.g. Theorem 2.2 of Duoandikoetxea \cite[Page 27]{DuoandikoetxeaBook}. We can apply this very well-known remark to prove the differentiation criterion 
\begin{equation}\label{Eq:pointwise}
  \forall f\in L^p(G),\,  \lim_{t\rightarrow 0^{+}}\smallint\limits_{G}f\left(x\cdot (t\cdot y)^{-1}\right)d\sigma(y)=f(x),\,\textnormal{a. e.}
\end{equation}
In \eqref{Eq:pointwise} we have assumed that the measure $d\sigma$ is normalised $\sigma(G)=1.$   In some sense the previous result is a noncommutative version of the Stein differentiation theorem, see Section \ref{Motivation} for details. 
\end{remark}
\begin{remark} We observe that in the Euclidean setting the hypotheses in Theorem \ref{Maximal:Function:Graded} are satisfied e.g. in the case of the surface measure $d\sigma$ on the sphere. Indeed, when $G=\mathbb{R}^n,$ $Q=n,$ $Q_0=n-1,$ $c=1,$  $a=\frac{n-1}{2},$ and $\varepsilon_0=1.$ In consequence $D$ in \eqref{D:definition} agrees with the topological dimension $n.$ We then recover the classical range $\frac{n}{n-1}<p\leq \infty,$ in the $L^p$-boundendess of the  maximal function associated to the spherical averages.     
\end{remark}
\begin{remark} Consider the dyadic maximal function
\begin{equation}\label{Maximal:Function:Graded:D}
    {M}^{d\sigma}_Df(x)=\sup_{j\in \mathbb{Z}}\left|\smallint\limits_{G}f\left(x\cdot (2^j\cdot y)^{-1}\right)d\sigma(y)\right|.
\end{equation}
    In \cite{CardonaDelgadoRuzhanskyDyadic}, we have established a criterion based on the Fourier transform of the compactly supported measure $d\sigma,$ to guarantee the $L^p$-boundendess of ${M}^{d\sigma}_D.$ More precisely, if for some $a>0$ the Fourier transform of $d\sigma$ satisfies the growth estimate
\begin{equation}\label{FT;Condition:Measure:Dya}
\max_{\pm}  \sup_{\pi\in\widehat{G}}\Vert\pi(\mathcal{R})^{\pm \frac{a}{\nu}}\widehat{d\sigma}(\pi)\Vert_{\textnormal{op}}<\infty,
\end{equation}
then, the dyadic maximal function
${M}^{d\sigma}_D:L^p(G)\rightarrow L^p(G)$ can be extended to a bounded operator for all $1<p\leq \infty.$ This extension was motivated by the seminal work due to   C. P.   Calder\'on \cite{Calderon1979}. The proof in \cite{CardonaDelgadoRuzhanskyDyadic} is inspired by the approach developed by Duoandikoetxea and Rubio De Francia in \cite{Duoandikoetxea:RubioDeFrancia:86}.  
\end{remark}

\subsection{Some previous works}\label{References:section} 
In 1981 Cowling in \cite{Cowling81} proved that $p>\frac{Q}{Q-1},$ is a necessary and sufficient condition for the $L^p$-boundedness of the full maximal function (associated with the Koranyi sphere) on the Heisenberg group $\mathbb{H}_n.$ Here $Q=2n+2$ is the homogeneous dimension of $\mathbb{H}_n.$ In the setting of non-commutative harmonic analysis, after Cowling's result, 
there has been considerable interest in the mapping properties of the full maximal function $M_{F}^{d\sigma}$ and of its dyadic version $M_{D}^{d\sigma}.$ 

In the context of the Heisenberg group $\mathbb{H}_n$ and on two-steps nilpotent Lie groups, we refer to  Cowling \cite{Cowling79},   Schmidt \cite{Schmidt}, Fischer \cite{Fischer2006}, Ganguly and  Thangavelu \cite{GangulyThangavelu2021}, Lacey \cite{Lacey2019}, Nevo and Thangavelu \cite{NevoThangavelu}, Narayanan and  Thangavelu \cite{NarayananThangavelu2004},   M\"uller and Seeger \cite{MullerSeeger2004},   Bagchi, Hait, Roncal and Thangavelu \cite{BagchiHaitRoncalThangavelu2018}, and Beltr\'an, Guo, Hickman and Seeger in \cite{BeltranGuoHickmanSeeger}.

We also refer the reader to the recent work \cite{GovidanAswinHickman} due to Govidan Sheri,  Hickman, and  Wright,   where a {\it curvature assumption} was analysed to guarantee the $L^p$-boundedness of the dyadic maximal function on homogeneous nilpotent Lie groups. The estimate in \cite{GovidanAswinHickman} admits a substantial degree of generality recovering some previous results. We refer the reader to the introduction of \cite{GovidanAswinHickman} for details. 

Curiously, even in the Euclidean case, the problem of determining the weak (1,1) boundedness of the dyadic spherical means is open. For some results related to this problem, we refer to Stein \cite{ChristStein87},  Christ \cite{Christ88},   Seeger, Tao, and  Wright \cite{STW}, and the recent work by Cladek and Krause \cite{CladekKrause}. 

\subsection{Relation with the quantisation on graded Lie groups}

From the point of view of the microlocal analysis, in the setting of nilpotent Lie groups, when investigating the boundedness of the full maximal function, the problem of estimating the behaviour of the Fourier transform of the measure involves e.g. the stationary phase method or the Hadamard parametrix method. However, in view of the recent {\it quantisation theory on graded Lie groups} \cite{FischerRuzhanskyBook}, there is a research program dedicated to extending the techniques from the Euclidean harmonic analysis to the robust setting of nilpotent Lie groups, which can be traced back to the work Folland and Stein \cite{FollandStein1982} and Taylor \cite{Taylor1986}. 

According to this approach, one is expected to involve criteria in terms of the symbols of Rockland operators when formulating the boundedness of operators of relevant interest in harmonic analysis as (singular integrals like) Fourier multipliers, (strongly singular integral operators in the form of) pseudo-differential operators and Fourier integral operators, as well as maximal averages and their corresponding maximal functions.   

For criteria involving Rockland operators and other hypoelliptic operators we refer the reader to \cite{Cardona2017,CardonaRuzhansky:Besov:spaces:CRAS,CardonaRuzhansky2022:Triebel,CardonaRuzhansky:Besov:spaces,FischerRuzhansky2021,Guorong:Ruz,NursultanovRuzhanskyTikhonov,RuzhanskyWirth2015}, for H\"ormander type Mihlin theorems on several functions spaces, and to \cite{CardonaDelgadoRuzhansky:Lp:Pseudo,Cardona2022:Math:Z,Cardona2023:Math:Z,CardonaThesis,DelgadoRuzhansky2019} for the $L^p$-boundeness of pseudo-differential operators, Fourier integral operators and oscillating singular integrals. For the $L^p$-$L^q$-boundeness of Fourier multipliers and of pseudo-differential operators we refer to \cite{AkylzhanovRuzhansky2020,KumarRuzhansky2022,KumarRuzhansky2023,KumarRuzhansky2023:IMRN,David:Michael:lp:lq:2023}. Further remarks will be provided in Section \ref{FinalRemarks}.

\subsubsection{Structure of the paper} In Section \ref{preliminaries} we shall review the notation used in this work. We also shall present the main notions related to graded nilpotent Lie groups, its Fourier analysis, and the spectral calculus of Rockland operators. In Section \ref{Motivation} we shall motivate the class of measures used in our analysis of the full maximal function in the setting of graded Lie groups. Section \ref{Proof} will be dedicated to the proof of our main Theorem \ref{Maximal:Function:Graded}. Finally, in Section \ref{FinalRemarks} we present some conclusions of this work concerning some recent developments in the harmonic analysis on graded Lie groups, namely, the quantisation of operators on graded Lie groups as developed in \cite{FischerRuzhanskyBook}.

\section{Preliminaries}\label{preliminaries}

In this section we precise the notation used in our further analysis. We also present the notions related to the Fourier analysis of nilpotent Lie groups and the properties of Rockland operators relevant to this work. For a complete background on these subjects we refer to the references \cite{CorwinGreenleafBook,FischerRuzhanskyBook,FollandStein1982}.

\subsection{Notation} The Haar measure of a locally compact Hausdorff topological group $G$ is denoted by $dx.$  We recall that the $\sigma$-algebra generated by the open subsets of $G$ is called the Borel $\sigma$-algebra.
As usual, the Haar measure of a Borel set $A\subset G$ is denoted by $|A|,$ and the measure of $A$ with respect to an arbitrary Radon measure $d\sigma$ is denoted by $\sigma(A)=\smallint_Ad\sigma.$  

Here we always consider measures $d\sigma$ taking non-negative values  $\sigma(A)\geq 0$ (instead of signed or complex measures). We recall that the support $K$ of a measure $d\sigma$ is the smallest closed set $K$ such that $\sigma(G\setminus K)=0.$ 

Here the notations $A\asymp B$ and $A\sim B,$ both indicate that we have the inequality $C_1B\leq A\leq C_2 B,$ for some constants $C_1,C_2>0$ independent of fundamental parameters, while the notation $A\lesssim B$ indicates that we have the inequality $A\leq C_2 B$ with $C_2$ independent of fundamental parameters. 

Finally, for a bounded linear operator  $T:X\rightarrow Y$  between two Banach spaces $X,Y,$ the quantity $\Vert T\Vert_{\textnormal{op}}=\sup_{\Vert x\Vert=1}\Vert Tx\Vert$ denotes its standard operator norm.

\subsection{Nilpotent Lie groups} In this subsection we introduce some basic facts about nilpotent Lie groups. For a consistent treatment on the subject, we refer to Corwin and Greenleaf \cite{CorwinGreenleafBook}.

Let $\mathfrak{g}$ be a Lie algebra over $\mathbb{R}$ with Lie bracket $[\cdot,\cdot].$ A subset $\mathfrak{b}\subset \mathfrak{g}$ is an ideal of $ \mathfrak{g},$ if $[ \mathfrak{b}, \mathfrak{g}]\subset  \mathfrak{b},$ where 
\begin{equation}
    [ \mathfrak{b}, \mathfrak{g}]:=\{[B,X]:B\in\mathfrak{b},\,X\in \mathfrak{g}\}.
\end{equation}
The descending central series of $\mathfrak{g}$ is inductively defined by
\begin{equation*}
    \mathfrak{g}^{(1)}=\mathfrak{g},\quad \mathfrak{g}^{(n+1)}=[\mathfrak{g},\mathfrak{g}^{(n)}]=\mathbb{R}\textnormal{-span}\{[X,Y]:X\in \mathfrak{g},Y\in \mathfrak{g}^{(n)}\,\}.
\end{equation*}If $\phi:\mathfrak{g}\rightarrow \mathfrak{h}$ is a Lie algebra homomorphism, then $\phi(\mathfrak{g}^{(k)})\subset \mathfrak{h}^{(k)}.$ Note that, in view of the identity of Jacobi, we have that
for all $p,q\in \mathbb{N},$ $[\mathfrak{g}^{(p)},\mathfrak{g}^{(q)}]\subset \mathfrak{g}^{(p+q)}.$ In particular, each $\mathfrak{g}^{(k)}$ is an ideal of $\mathfrak{g},$ that is $[\mathfrak{g},\mathfrak{g}^{(k)}]\subset \mathfrak{g}^{(k)}.$

A Lie algebra $\mathfrak{g}$ is {\it nilpotent} if there is an integer $s$ in such a way that $\mathfrak{g}^{(s+1)}=\{0\}$  but $\mathfrak{g}^{(s)}\neq \emptyset.$ If this $s$ is minimal, we say that $\mathfrak{g}$ is an {\it $s$-step nilpotent Lie algebra}. Note that  $\mathfrak{g}$ is $s$-step nilpotent Lie algebra if and only if all brackets of at least $s+1$-elements of $\mathfrak{g}$ are zero but not all brackets of order $s$ are. Consequently, if $\mathfrak{g}$ is {\it nilpotent} (that is, $\mathfrak{g}$ is an $s$-nilpotent Lie algebra for some $s\in \mathbb{N}$), its center is not trivial. Indeed, if $\mathfrak{g}$ is $s$-step nilpotent, then $\mathfrak{g}^{(s)}$ is an abelian Lie algebra (also called central).

Now, let $\mathfrak{g}$ be any Lie algebra; let $\mathfrak{g}_{(1)}:=\textnormal{Center}(\mathfrak{g})$ be its center, and define
\begin{equation*}
    \mathfrak{g}_{(j)}:=\{X\in \mathfrak{g}: X\textnormal{ is central mod} \mathfrak{g}_{j-1} \}=\{X\in \mathfrak{g}: [X,\mathfrak{g}]\subset \mathfrak{g}_{(j-1)}\}.
\end{equation*}Each $\mathfrak{g}_{(j)}$ is an ideal. This sequence of ideals is called the {\it ascending central series of } $\mathfrak{g}.$ 
We have that a Lie algebra $\mathfrak{g}$ is $s$-nilpotent if and only if $\mathfrak{g}=\mathfrak{g}_{(s)}\neq \mathfrak{g}_{(s-1)}.$

A {\it nilpotent Lie group} $G$ is a Lie group whose Lie algebra $\mathfrak{g}\cong T_{e}G$ is nilpotent (here $e=e_{G}$ denotes the neutral element of $G$). If $G$ is a connected and simply connected nilpotent Lie group then we have that the exponential mapping
$$\textnormal{exp}:\mathfrak{g}\rightarrow G$$ is an analytic diffeomorphism. This diffeomorphism can be applied, among other things,  to reconstruct the group operation in view of the {\it Campbell-Baker-Hausdorff formula,} see \cite[Section 1.2]{CorwinGreenleafBook}.  
In the next subsection we are going to introduce the family of nilpotent Lie groups that will be considered here, namely, nilpotent homogeneous Lie groups and, in particular, graded Lie groups.

\subsection{Graded Lie groups $\&$ Rockland operators}
 Below we introduce some basic facts about the Fourier analysis and the spectral calculus of Rockland operators used in our further analysis. For the aspects of the theory of Rockland operators on graded Lie groups, we follow \cite{FischerRuzhanskyBook}.

$G$ denotes a homogeneous Lie group, that is a connected and simply connected nilpotent  Lie group whose Lie algebra $\mathfrak{g}$ is endowed with a family of dilations $D_{r,\mathfrak{g}}.$ We define it as follows. A family of dilations $  \textnormal{Dil}(\mathfrak{g}):=\{D_{r,\mathfrak{g}}:\,r>0\}$  on the Lie algebra $\mathfrak{g}$ is a family of  automorphisms on $\mathfrak{g}$  satisfying the following two compatibility conditions:
\begin{itemize}
\item[A.] For every $r>0,$ $D_{r,\mathfrak{g}}$ is a map of the form
$ D_{r,\mathfrak{g}}=\textnormal{Exp}(\ln(r)A), $ 
for some diagonalisable linear operator $A\equiv \textnormal{diag}[\nu_1,\cdots,\nu_n]:\mathfrak{g}\rightarrow \mathfrak{g}.$  
\item[B.] $\forall X,Y\in \mathfrak{g}, $ and $r>0,$ $[D_{r,\mathfrak{g}}X, D_{r,\mathfrak{g}}Y]=D_{r,\mathfrak{g}}[X,Y].$ 
\end{itemize} We call  the eigenvalues of the matrix $A,$ $\nu_1,\nu_2,\cdots,\nu_n,$ the dilations weights or weights of $G$. 

The homogeneous dimension of a homogeneous nilpotent Lie group $G$ whose dilations are defined via $ D_{r,\mathfrak{g}}=\textnormal{Exp}(\ln(r)A),$ is given by  $$  Q=\textnormal{Tr}(A)=\nu_1+\cdots+\nu_n,  $$  where $\nu_{i},$ $i=1,2,\cdots, n,$ are the eigenvalues of $A.$ The family of dilations $\textnormal{Dil}(\mathfrak{g})$ of the Lie algebra $\mathfrak{g}$ induces a family of  mappings on $G$ defined via,
$$\textnormal{Dil}(G):=\{ D_{r}:=\exp\circ D_{r,\mathfrak{g}} \circ \exp^{-1}:\,\, r>0\}. $$
We refer to the elements of this family as the dilations on the group. 

To simplify the notation we also denote $r\cdot x=D_{r}(x),$ $x\in G,$ $r>0.$  The dilations of the group and the Haar measure $dx$ on $G$ are related by the identity 
$$\smallint\limits_{G}f(x)dx= r^{Q}\smallint\limits_{G}(f\circ D_{r})(x)dx,\,\,\forall f\in L^1(G).$$

A connected, simply connected nilpotent Lie group $G$ is graded if its Lie algebra $\mathfrak{g}$ may be decomposed as the direct sum of subspaces $  \mathfrak{g}=\mathfrak{g}_{1}\oplus\mathfrak{g}_{2}\oplus \cdots \oplus \mathfrak{g}_{s}$  such that the following bracket conditions are satisfied: 
$[\mathfrak{g}_{i},\mathfrak{g}_{j} ]\subset \mathfrak{g}_{i+j},$ where  $ \mathfrak{g}_{i+j}=\{0\}$ if $i+j\geq s+1,$ for some $s.$ 

Examples of graded Lie groups are the Heisenberg group $\mathbb{H}_n$ and more generally any stratified group where the Lie algebra $ \mathfrak{g}$ is generated by the first stratum $\mathfrak{g}_{1}$. 

We say that $\pi$ is a continuous, unitary, and irreducible  representation of the group $G,$ if the following properties are satisfied,
\begin{itemize}
    \item[C.] $\pi\in \textnormal{Hom}(G, \textnormal{U}(H_{\pi})),$ for some separable Hilbert space $H_\pi,$ i.e. $\pi(xy)=\pi(x)\pi(y)$ and for the  adjoint of $\pi(x),$ $\pi(x)^*=\pi(x^{-1}),$ for every $x,y\in G.$ This property says that the representation is compatible with the group operation.
    \item[D.] The map $(x,v)\mapsto \pi(x)v, $ from $G\times H_\pi$ into $H_\pi$ is continuous. This says that the representation is a strongly continuous mapping.
    \item[E.] For every $x\in G,$ and $W_\pi\subset H_\pi,$ if $\pi(x)W_{\pi}\subset W_{\pi},$ then $W_\pi=H_\pi$ or $W_\pi=\{0\}.$ This means that  the representation $\pi$ is irreducible if its only invariant subspaces are $W=\{0\}$ and $W=H_\pi,$ the trivial ones. 
\end{itemize}
Two unitary representations $$ \pi\in \textnormal{Hom}(G,\textnormal{U}(H_\pi)) \textnormal{ and  }\eta\in \textnormal{Hom}(G,\textnormal{U}(H_\eta))$$  are equivalent if there exists a bounded invertible linear mapping $T:H_\pi\rightarrow H_\eta$ such that for any $x\in G,$ $T\pi(x)=\eta(x)T.$ 

The relation $\sim$ on the set of unitary and irreducible representations $\textnormal{Rep}(G)$ defined by: {\it $\pi\sim \eta$ if and only if $\pi$ and $\eta$ are equivalent representations,} is an equivalence relation. The quotient 
$$
    \widehat{G}:={\textnormal{Rep}(G)}/{\sim}
$$is called the unitary dual of $G.$ The Fourier transform takes values at elements of $\widehat{G},$ namely, the Fourier transform of $f\in L^1(G), $ at $\pi\in\widehat{G},$ is defined by 
\begin{equation*}
    \widehat{f}(\pi)\equiv [\mathscr{F}f](\pi):=\smallint\limits_{G}f(x)\pi(x)^*dx:H_\pi\rightarrow H_\pi,
\end{equation*} where the notation $\pi(x)^{*}$ denotes the adjoint operator of $\pi(x),$ $x\in G.$ 

In this work a central notion is the Fourier transform of finite measures. Let $d\sigma$ be a finite Borel measure on $G.$ We record that its Fourier transform at $\pi\in \widehat{G}$ is given by 
\begin{equation}
   \boxed{ \widehat{d\sigma}(\pi)=\smallint_G \pi(x)^{*}d\sigma(x)}
\end{equation}
The Fourier transform of a (positive) finite Borel measure is a bounded operator in any representation space, indeed
\begin{align*}
   \forall \pi \in \widehat{G},\, \Vert \widehat{d\sigma}(\pi)\Vert_{\textnormal{op}}\leq  \smallint_G \Vert\pi(x)^{*}\Vert_{\textnormal{op}}d\sigma(x)=\sigma(G)<\infty.
\end{align*}
The Schwartz space $\mathscr{S}(G)$ is defined by the smooth functions $f:G\rightarrow\mathbb{C},$ such that via the exponential mapping $f\circ \exp:\mathfrak{g}\cong \mathbb{R}^n\rightarrow \mathbb{C}$ can be identified with functions on the Schwartz class $\mathscr{S}(\mathbb{R}^n).$ Then, the Schwartz space on the unitary dual $\widehat{G}$ is defined by the image under the group Fourier transform of the Schwartz space $\mathscr{S}(G),$ that is  $\mathscr{F}:\mathscr{S}(G)\rightarrow \mathscr{S}(\widehat{G}):=\mathscr{F}(\mathscr{S}(G)).$
The Fourier inversion formula is given by \begin{equation*}
  \forall f\in L^1(G)\cap L^2(G),\,\, f(x)=\smallint\limits_{\widehat{G}}\textnormal{Tr}[\pi(x)\widehat{f}(\pi)]d\pi,\,\,x\in G.
\end{equation*} Here $d\pi$ denotes the Plancherel measure on $\widehat{G}.$ 
The $L^2$-space on the unitary dual $\widehat{G}$ is defined by the completion of the set $\mathscr{S}(G)$ with respect to the norm
\begin{equation}
    \Vert \sigma\Vert_{L^2(\widehat{G})}:=\left(\smallint\limits_{\widehat{G}}\|\sigma(\pi)\|_{\textnormal{HS}}^2d\pi \right)^{\frac{1}{2}},\,\,\sigma(\pi)=\widehat{f}(\pi)\in \mathscr{S}(\widehat{G}),
\end{equation}where $\|\cdot\|_{\textnormal{HS}}$ denotes the Hilbert-Schmidt norm of  operators on every representation space. The corresponding inner product on $L^2(\widehat{G})$ is given by
\begin{equation}
    (\sigma,\tau)_{L^2(\widehat{G})}:=\smallint\limits_{G}\textnormal{Tr}[\sigma(\pi)\tau(\pi)^{*}]d\pi,\,\,\sigma,\tau\in L^2(\widehat{G}).
\end{equation} Then,
the Plancherel theorem says that $\Vert f\Vert_{L^2(G)}=\Vert \widehat{f}\Vert_{L^2(\widehat{G})}$ for all $f\in L^2(G).$

A continuous linear operator $T:C^\infty(G)\rightarrow C^\infty(G)$ is homogeneous of  degree $\nu_T\in \mathbb{C}$ if for every $r>0$ the equality 
\begin{equation*}
T(f\circ D_{r})=r^{\nu_T}(Tf)\circ D_{r}
\end{equation*}
holds for every $f\in C^\infty(G). $

Let us consider $\pi\in\widehat{G},$ then $\pi:G\rightarrow U({H}_{\pi})$ is strongly continuous. We denote by ${H}_{\pi}^{\infty}$ the set of smooth vectors, also called G\r{a}rding vectors, formed by those $v\in {H}_{\pi}$ such that the function $x\mapsto \pi(x)v,$ $x\in \widehat{G},$ is smooth.  Then,  a Rockland operator is a left-invariant partial  differential operator $$ \mathcal{R}=\sum_{[\alpha]=\nu}a_{\alpha}X^{\alpha}:C^\infty(G)\rightarrow C^{\infty}(G),\quad [\alpha]:=\sum_{i=1}^{n}\alpha_{i}\nu_i,$$  which is homogeneous of positive degree $\nu=\nu_{\mathcal{R}}$ and such that, for every unitary irreducible non-trivial representation $\pi\in \widehat{G},$ its symbol $\pi(\mathcal{R})$ defined via the Fourier inversion formula by
\begin{equation}\label{Symbol:R}
   \mathcal{R}f(x)= \smallint\limits_{\widehat{G}}\textnormal{Tr}[\pi(x)\pi(\mathcal{R})\widehat{f}(\pi)]d\pi,\,\,x\in G,
\end{equation}
is injective on ${H}_{\pi}^{\infty}.$ Then, $\sigma_{\mathcal{R}}(\pi)=\pi(\mathcal{R})$ coincides with the infinitesimal representation of $\mathcal{R}$ as an element of the universal enveloping algebra $\mathfrak{U}(\mathfrak{g})$. 

\begin{example}
Let $G$ be a graded Lie group of topological dimension $n.$ We denote by $\{D_{r}\}_{r>0}$ the natural family of dilations of its Lie algebra $\mathfrak{g}:=\textnormal{Lie}(G),$ and by $\nu_1,\cdots,\nu_n$ its weights.  We fix a basis $Y=\{X_1,\cdots, X_{n}\}$ of  $\mathfrak{g}$ satisfying $D_{r}X_j=r^{\nu_j}X_{j},$ for $1\leq j\leq n,$ and all $r>0.$ If $\nu_{\circ}$ is any common multiple of $\nu_1,\cdots,\nu_n,$ the  operator 
$$ \mathcal{R}=\sum_{j=1}^{n}(-1)^{\frac{\nu_{\circ}}{\nu_j}}c_jX_{j}^{\frac{2\nu_{\circ}}{\nu_j}},\,\,c_j>0, $$ is a positive Rockland operator of homogeneous degree $2\nu_{\circ }$ on $G$ (see Lemma 4.1.8 of \cite{FischerRuzhanskyBook}).    
\end{example}
If the Rockland operator $\mathcal{R}$ is symmetric, then $\mathcal{R}$ and $\pi(\mathcal{R})$ admit self-adjoint extensions on $L^{2}(G)$ and ${H}_{\pi},$ respectively.
Now if we preserve the same notation for their self-adjoint
extensions and we denote by $E$ and $E_{\pi(\mathcal{R})}$  their spectral measures, we will denote by
\begin{equation}\label{Functional:identities:spectral:calculus}
    \psi(\mathcal{R})=\smallint\limits_{-\infty}^{\infty}\psi(\lambda) dE(\lambda),\,\,\,\textnormal{and}\,\,\,\pi(\psi(\mathcal{R}))\equiv \psi(\pi(\mathcal{R}))=\smallint\limits_{-\infty}^{\infty}\psi(\lambda) dE_{\pi(\mathcal{R})}(\lambda),
\end{equation}
the functions defined by the functional calculus. 

In general, we will reserve the notation $\{dE_A(\lambda)\}_{0\leq\lambda<\infty}$ for the spectral measure associated with a positive and self-adjoint operator $A$ on a Hilbert space $H.$ Moreover if $f\in L^{\infty}(\mathbb{R}^{+}_0),$ we are going to use the property
\begin{equation}\label{op:inequality}
    \boxed{\Vert f(A)\Vert_{\textnormal{op}}=\left\Vert \smallint_{0}^{\infty}f(\lambda)dE_A(\lambda) \right\Vert_{\textnormal{op}}\leq \Vert f\Vert_{L^{\infty}(\mathbb{R}^{+}_0)}}
\end{equation}
We now recall a lemma on dilations on the unitary dual $\widehat{G},$ which will be useful in our analysis of spectral multipliers.   For the proof, see Lemma 4.3 of \cite{FischerRuzhanskyBook}.
\begin{lemma}\label{dilationsrepre}
For every $\pi\in \widehat{G},$ let us define  
\begin{equation}\label{dilations:repre}
  D_{r}(\pi)(x)\equiv (r\cdot \pi)(x):=\pi(r\cdot x)\equiv \pi(D_r(x)),  
\end{equation}
 for every $r>0$ and all $x\in G.$ Then, if $f\in L^{\infty}(\mathbb{R}),$ $f(\pi^{(r)}(\mathcal{R}))=f({r^{\nu}\pi(\mathcal{R})}).$
\end{lemma}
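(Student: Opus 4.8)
The plan is to reduce the functional-calculus identity to an \emph{operator-level} equality between the two symbols, namely
\[ (r\cdot\pi)(\mathcal{R})=r^{\nu}\,\pi(\mathcal{R}), \]
after which the claim for an arbitrary $f\in L^{\infty}(\mathbb{R})$ is immediate, since the spectral (functional) calculus depends only on the underlying self-adjoint operator. Throughout I identify $\pi^{(r)}=r\cdot\pi=D_r(\pi)$ as in \eqref{dilations:repre}.

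First I would compute the infinitesimal representation $d(r\cdot\pi)$ attached to the dilated representation. Because $(r\cdot\pi)(x)=\pi(r\cdot x)=\pi(D_r(x))$ and the group and algebra dilations are intertwined by the exponential map, one has $r\cdot\exp(tX)=\exp\!\big(t\,D_{r,\mathfrak{g}}X\big)$ for $X\in\mathfrak{g}$. Differentiating at $t=0$ therefore gives
\[ d(r\cdot\pi)(X)=\frac{d}{dt}\Big|_{t=0}\pi\big(\exp(t\,D_{r,\mathfrak{g}}X)\big)=d\pi\big(D_{r,\mathfrak{g}}X\big). \]
In particular, for the adapted basis $\{X_1,\dots,X_n\}$ satisfying $D_{r,\mathfrak{g}}X_j=r^{\nu_j}X_j$, this yields $d(r\cdot\pi)(X_j)=r^{\nu_j}\,d\pi(X_j)$.

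Next I would transport this to the Rockland operator. Writing $\mathcal{R}=\sum_{[\alpha]=\nu}a_{\alpha}X^{\alpha}$, the symbol $(r\cdot\pi)(\mathcal{R})$ is obtained from $\pi(\mathcal{R})$ by replacing each generator $d\pi(X_j)$ with $r^{\nu_j}d\pi(X_j)$, so that each monomial $X^{\alpha}$ acquires the factor $r^{\sum_j \nu_j\alpha_j}=r^{[\alpha]}$. Since only multi-indices with $[\alpha]=\nu$ appear in $\mathcal{R}$ — which is precisely the statement that $\mathcal{R}$ is homogeneous of degree $\nu$ — every term scales by the \emph{same} factor $r^{\nu}$, giving the desired identity $(r\cdot\pi)(\mathcal{R})=r^{\nu}\pi(\mathcal{R})$ on the G\r{a}rding vectors $H_\pi^{\infty}$.

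Finally I would pass to the self-adjoint extensions and conclude via the spectral theorem. As $r^{\nu}>0$ is a positive scalar, the self-adjoint extension of $(r\cdot\pi)(\mathcal{R})$ equals $r^{\nu}$ times that of $\pi(\mathcal{R})$, so their spectral measures are related by $E_{(r\cdot\pi)(\mathcal{R})}([0,\lambda])=E_{\pi(\mathcal{R})}([0,r^{-\nu}\lambda])$, and hence $f\big((r\cdot\pi)(\mathcal{R})\big)=f\big(r^{\nu}\pi(\mathcal{R})\big)$ for every $f\in L^{\infty}(\mathbb{R})$. The only genuinely delicate point is justifying that two operators agreeing on the common core $H_\pi^{\infty}$ possess literally the \emph{same} self-adjoint extension, so that the bounded functional calculus of one coincides with that of the other; this rests on the essential self-adjointness of $\pi(\mathcal{R})$ on $H_\pi^{\infty}$ (a consequence of the Rockland condition via \eqref{Symbol:R}) together with the fact that scaling by the positive constant $r^{\nu}$ preserves the core and commutes with taking operator closures.
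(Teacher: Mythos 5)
Your argument is correct and is essentially the standard one: the paper itself offers no proof of this lemma, delegating it to Lemma 4.3 of \cite{FischerRuzhanskyBook}, and the proof found there proceeds exactly as you do, by observing that $d(r\cdot\pi)(X_j)=r^{\nu_j}d\pi(X_j)$, deducing $(r\cdot\pi)(\mathcal{R})=r^{\nu}\pi(\mathcal{R})$ from the homogeneity of $\mathcal{R}$, and then invoking essential self-adjointness of $\pi(\mathcal{R})$ on $H_\pi^{\infty}$ to identify the functional calculi. Your closing caveat about agreement on a common core being what licenses equality of the self-adjoint extensions is precisely the right point to flag, and it is covered by the known essential self-adjointness of symbols of positive Rockland operators on the G\r{a}rding space.
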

We continuously shall use the notation 
\begin{equation}\label{start:notation}
    \boxed{K_{r}:=r^{-Q}K(r^{-1}\cdot)}
\end{equation} and the property 
\begin{equation}\label{Eq:dilatedFourier}
    \widehat{K}_{r}(\pi)=\smallint\limits_{G}r^{-Q}K(r^{-1}\cdot x)\pi(x)^*dx=\smallint\limits_{G}K(y)\pi(r\cdot y)^{*}dy=\widehat{K}(r\cdot \pi),
\end{equation}for $K\in L^1(G),$ any $\pi\in \widehat{G}$ and all $r>0,$ with $(r\cdot \pi)(y)=\pi(r\cdot y),$ $y\in G,$ as in \eqref{dilations:repre}.

The following lemma presents the action of the dilations of the group $G$ into the kernels of bounded functions of a Rockland operator $\mathcal{R},$ see \cite[Page 179]{FischerRuzhanskyBook}.
\begin{lemma}
Let $f\in L^{\infty}(\mathbb{R}^{+}_0)$ be a bounded Borel function and let $r>0.$ Then, we have
\begin{equation}\label{Fundamental:lemmaCZ:graded}
 \forall x\in G,\,   f(r^{\nu}\mathcal{R})\delta(x)=r^{-Q}[f(\mathcal{R})\delta](r^{-1}\cdot x),
\end{equation}where $Q$ is the homogeneous dimension of $G.$
\end{lemma}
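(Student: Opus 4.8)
The plan is to prove the kernel identity \eqref{Fundamental:lemmaCZ:graded} by showing that the two sides have the \emph{same group Fourier transform} and then invoking the injectivity of the Fourier transform. Three ingredients will be combined: the fact that the symbol of a spectral multiplier of $\mathcal{R}$ is the corresponding function of the infinitesimal symbol, namely $\widehat{f(\mathcal{R})\delta}(\pi)=f(\pi(\mathcal{R}))$ (which follows from identifying the symbol $\pi(\mathcal{R})$ of $\mathcal{R}$ via \eqref{Symbol:R} together with the functional calculus \eqref{Functional:identities:spectral:calculus}); the dilation identity $(r\cdot\pi)(\mathcal{R})=r^{\nu}\pi(\mathcal{R})$ recorded in Lemma \ref{dilationsrepre}; and the behaviour of the Fourier transform under dilations of the kernel from \eqref{Eq:dilatedFourier}.

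First I would compute the Fourier transform of the left-hand side. Applying the symbol identity to the bounded multiplier $\lambda\mapsto f(r^{\nu}\lambda)$ of $\mathcal{R}$, and then Lemma \ref{dilationsrepre}, gives
\begin{equation*}
\widehat{f(r^{\nu}\mathcal{R})\delta}(\pi)=f(r^{\nu}\pi(\mathcal{R}))=f((r\cdot\pi)(\mathcal{R})).
\end{equation*}
Next I would turn to the right-hand side. Writing $k:=f(\mathcal{R})\delta$, so that $\widehat{k}(\pi)=f(\pi(\mathcal{R}))$, and using the dilation notation $k_{r}:=r^{-Q}k(r^{-1}\cdot)$ from \eqref{start:notation}, the right-hand side of \eqref{Fundamental:lemmaCZ:graded} is precisely $k_{r}(x)$; moreover \eqref{Eq:dilatedFourier} yields
\begin{equation*}
\widehat{k_{r}}(\pi)=\widehat{k}(r\cdot\pi)=f((r\cdot\pi)(\mathcal{R})).
\end{equation*}
Comparing the two displays, the operator-valued functions $\widehat{f(r^{\nu}\mathcal{R})\delta}(\pi)$ and $\widehat{k_{r}}(\pi)$ coincide for every $\pi\in\widehat{G}$, so the two kernels agree as distributions, which is \eqref{Fundamental:lemmaCZ:graded}.

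The main point requiring care is that $\delta$ and the spectral kernels $f(\mathcal{R})\delta$ are distributions rather than integrable functions, so the manipulations above must be read in the distributional sense on $G$ and $\widehat{G}$; this is the standard framework for spectral multipliers of Rockland operators, in which the symbol identity $\widehat{f(\mathcal{R})\delta}(\pi)=f(\pi(\mathcal{R}))$ is available. To sidestep these subtleties one may argue directly at the operator level: the map $U_{r}g:=r^{Q/2}(g\circ D_{r})$ is unitary on $L^{2}(G)$, and the homogeneity of $\mathcal{R}$ gives $U_{r}^{-1}\mathcal{R}U_{r}=r^{\nu}\mathcal{R}$, so the functional calculus yields $f(r^{\nu}\mathcal{R})=U_{r}^{-1}f(\mathcal{R})U_{r}=D_{r^{-1}}^{*}f(\mathcal{R})D_{r}^{*}$, the scalars $r^{\pm Q/2}$ cancelling. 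Testing this against $g\in\mathscr{S}(G)$, writing $f(\mathcal{R})$ as right convolution by $k$, and carrying out the change of variables $w=r\cdot z$ together with the automorphism property of the dilations (so that $(r^{-1}\cdot w^{-1})(r^{-1}\cdot x)=r^{-1}\cdot(w^{-1}x)$) shows that $f(r^{\nu}\mathcal{R})g=g*k_{r}$, identifying the kernel of $f(r^{\nu}\mathcal{R})$ with $k_{r}$ and giving \eqref{Fundamental:lemmaCZ:graded} pointwise.
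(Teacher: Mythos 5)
Your proof is correct. Note that the paper itself offers no proof of this lemma; it is quoted directly from \cite[Page 179]{FischerRuzhanskyBook}, so there is no in-paper argument to compare against. Your first route (computing the group Fourier transform of both sides, using $\widehat{f(\mathcal{R})\delta}(\pi)=f(\pi(\mathcal{R}))$, the dilation identity $(r\cdot\pi)(\mathcal{R})=r^{\nu}\pi(\mathcal{R})$ from Lemma \ref{dilationsrepre}, and the kernel-dilation formula \eqref{Eq:dilatedFourier}, then invoking injectivity of the Fourier transform) is exactly the standard argument, and your caveat is the right one: \eqref{Eq:dilatedFourier} is stated in the paper only for $K\in L^{1}(G)$, whereas $f(\mathcal{R})\delta$ is in general only a tempered distribution, so the computation must be justified in the distributional/Plancherel sense. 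Your second, operator-level route neatly sidesteps this: the conjugation identity $U_{r}^{-1}\mathcal{R}U_{r}=r^{\nu}\mathcal{R}$ for the unitary dilation $U_{r}g=r^{Q/2}(g\circ D_{r})$ passes to the functional calculus by uniqueness of the spectral resolution, and the change of variables $w=r\cdot z$ together with the automorphism property $(r^{-1}\cdot w)^{-1}=r^{-1}\cdot w^{-1}$ correctly produces the factor $r^{-Q}$ and the dilated argument $r^{-1}\cdot(w^{-1}x)$, identifying the convolution kernel of $f(r^{\nu}\mathcal{R})$ with $k_{r}$. Either argument alone suffices; presenting both is a sound way to handle the distributional subtlety.
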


\section{Motivating the curvature conditions}\label{Motivation}

Before presenting the proof of our main theorem in Section \ref{Proof}, we are going to motivate the curvature conditions (ii) and (iii) in Definition \ref{Admissible:measure}. To do so we follow the arguments presented in  Fefferman \cite[Page 527]{Fefferman2020:Stein:memory} describing the applications of the Tauberian type theorems, as observed by Stein in the proof of his differentiation theorem, see \cite[Theorem I.2.2]{Fefferman2020:Stein:memory}. We present our motivation in the next subsection.

\subsubsection{The Tauberian type condition}
Here, we are going to expose the arguments in \cite[Page 527]{Fefferman2020:Stein:memory} in the setting of homogeneous nilpotent Lie groups. First, we recall the Tauberian theorem:

\begin{itemize}
    \item {\it Suppose that $\lim_{R\rightarrow 0}\frac{1}{R}\smallint_0^RF(r)dr$ exists and that $\smallint_0^\infty r|\frac{dF}{dr}|^2dr<\infty.$ Then, $\lim_{r\rightarrow 0^{+}}F(r)$ exists, and equals $\lim_{R\rightarrow 0}\frac{1}{R}\smallint_0^RF(r)dr.$}
\end{itemize} Now, let us consider a compactly supported measure $d\sigma$ on $G,$ and let us identify the required conditions in order that 
\begin{equation}\label{Eq:pointwise:2}
  \forall f\in L^2(G),\quad  \lim_{t\rightarrow 0}\smallint\limits_{G}f\left(x\cdot (t\cdot y)^{-1}\right)d\sigma(y)=f(x),\,\textnormal{a. e.}
\end{equation}
In \eqref{Eq:pointwise:2} we have assumed that the measure $d\sigma$ is normalised $\sigma(G)=1.$    By the Riesz-Representation theorem we can write $d\sigma(y)=K(y)dy,$ where $dy$ denotes the Haar measure on $G.$ Then, using the notation in \eqref{start:notation}, the limit \eqref{Eq:pointwise:2} can be rewritten in terms of the convolution of the group as
\begin{equation}
     \forall f\in L^2(G),\,  \lim_{t\rightarrow 0}f\ast K_t(x)=f(x),\,\textnormal{a. e.}
\end{equation} 
To have control of the operator-valued group Fourier transform $\widehat{f}(\pi)$, we are going to use the following operator inequality,
\begin{equation}\label{Aux:1}
    |\textnormal{Tr}[f(\pi)^{*}\widehat{f}(\pi)B(\pi)]|\leq \Vert f(\pi)^{*}\widehat{f}(\pi)  \Vert_{S_{1}(H_\pi)}\Vert B(\pi)\Vert_{\textnormal{op}},
\end{equation}where $B(\pi)$ is any bounded operator on the representation space $H_\pi,$ $\Vert B(\pi)\Vert_{\textnormal{op}}$ is its operator norm, $S_{1}(H_\pi)$ denotes the ideal of trace class operators on $H_\pi,$ and $\Vert \cdot \Vert_{S_{1}(H_\pi)}$ is its norm which is given by the sum of the singular values of the operator.  Note that
\begin{equation}\label{Aux:2}
    \Vert f(\pi)^{*}\widehat{f}(\pi)  \Vert_{S_{1}(H_\pi)}=\Vert\widehat{f}(\pi)\Vert_{\textnormal{HS}}^2=\textnormal{Tr}[\widehat{f}(\pi)^{*}\widehat{f}(\pi)],
\end{equation} is the square of the Hilbert-Schmidt norm of $\widehat{f}(\pi).$

Let $f\in L^2(G).$ For each fixed $t>0,$ define  $F(x,t)$ by $\widehat{F}(\pi,t)=\widehat{d\sigma}(t\cdot \pi)\widehat{f}(\pi).$  
The differentiability of the mapping $t\mapsto \widehat{d\sigma}(t\cdot \pi) $  will be discussed in Subsection \ref{Differentiability:d:sigma} below.

Using Plancherel theorem we have that 
\begin{align*}
    \smallint_{G}\left( \smallint_0^\infty r\left| \frac{\partial}{\partial r}F(x,r)\right|^{2}dr  \right)dx &= \smallint_0^\infty \smallint_{G} r \left| \frac{\partial}{\partial r}F(x,r)\right|^{2} dxdr=  \smallint_0^\infty \smallint_{\widehat{G}} r \Vert \frac{\partial}{\partial r}\widehat{F}(\pi,r)\Vert_{\textnormal{HS}}^{2} d\pi dr\\
    &= \smallint_0^\infty \smallint_{\widehat{G}} r \Vert \frac{\partial}{\partial r}\left\{\widehat{d\sigma}(r\cdot \pi)\right\}\widehat{f}(\pi)\Vert_{\textnormal{HS}}^{2} d\pi dr\\
    &= \smallint_0^\infty \smallint_{\widehat{G}} r \textnormal{Tr}[\widehat{f}(\pi)^*\left|\frac{\partial}{\partial r}\left\{\widehat{d\sigma}(r\cdot \pi)\right\}\right|^2\widehat{f}(\pi)] d\pi dr,
\end{align*} where we have used the notation $|T|^2=T^*T,$ for the operator $T=\frac{\partial}{\partial r}\left\{\widehat{d\sigma}(r\cdot \pi)\right\}.$ 
Observe that 
\begin{align*}
  &\smallint_0^\infty \smallint_{\widehat{G}} r \textnormal{Tr}[\widehat{f}(\pi)^*\left|\frac{\partial}{\partial r}\left\{\widehat{d\sigma}(r\cdot \pi)\right\}\right|^2\widehat{f}(\pi)] d\pi dr\\
  &= \smallint_{\widehat{G}}  \textnormal{Tr}[\widehat{f}(\pi)\widehat{f}(\pi)^*\smallint_0^\infty r\left|\frac{\partial}{\partial r}\left\{\widehat{d\sigma}(r\cdot \pi)\right\}\right|^2 dr] d\pi  \\ 
  &\leq \smallint_{\widehat{G}}  \textnormal{Tr}[\widehat{f}(\pi)\widehat{f}(\pi)^*]\left\Vert\smallint_0^\infty r\left|\frac{\partial}{\partial r}\left\{\widehat{d\sigma}(r\cdot \pi)\right\}\right|^2 dr\right\Vert_{\textnormal{op}} d\pi\\
  &\leq \smallint_{\widehat{G}}  \textnormal{Tr}[\widehat{f}(\pi)\widehat{f}(\pi)^*] d\pi\sup_{\pi'\in \widehat{G}}  \smallint_0^\infty r\left\Vert\frac{\partial}{\partial r}\left\{\widehat{d\sigma}(r\cdot \pi')\right\}\right\Vert^2_{\textnormal{op}} dr.
\end{align*}
Now, if $d\sigma$ satisfies the integrability condition 
\begin{equation}
  \Sigma_{0}^2:= \sup_{\pi'\in \widehat{G}}  \smallint_0^\infty r\left\Vert\frac{\partial}{\partial r}\left\{\widehat{d\sigma}(r\cdot \pi')\right\}\right\Vert^2_{\textnormal{op}} dr<\infty,
\end{equation}  we have proved that 
\begin{equation}
    \smallint_{G}\left( \smallint_0^\infty r\left| \frac{\partial}{\partial r}F(x,r)\right|^{2}dr  \right)dx \leq \Sigma_0^2 \Vert f\Vert_{L^2}^2.
\end{equation}This analysis shows that 
\begin{equation}
    \smallint_0^\infty r\left| \frac{\partial}{\partial r}F(x,r)\right|^{2}dr <\infty,
\end{equation}for almost every $x\in G.$ On the other hand,  $F(x,r)=f\ast K_r(x)$ can be understood as the convolution of $f$ with a standard approximate identity, (see e.g.  Folland and Stein  \cite[Page 67]{FollandStein1982})  and $\lim_{R\rightarrow 0^{+}}\frac{1}{R}\smallint_0^R F(x,r)dr=f(x)$ for almost every $x\in G.$
In consequence, for almost every $x\in G,$
\begin{equation*}
    \lim_{r\rightarrow 0^{+}}F(x,r)=\lim_{R\rightarrow 0^{+}}\frac{1}{R}\smallint_0^RF(x,r)dr=f(x),
\end{equation*} proving the following version of the Stein differentiation theorem.
\begin{itemize}\item  {\it The equality:
$ \lim_{t\rightarrow 0}\smallint\limits_{G}f\left(x\cdot (t\cdot y)^{-1}\right)d\sigma(y)=f(x),$ holds almost everywhere for all $f\in L^2(G),$ provided that the normalised measure $d\sigma,$ being compactly supported,  satisfies the following Tauberian-type condition}
\begin{equation}\label{curvature:intuition}
     \sup_{\pi'\in \widehat{G}}  \smallint_0^\infty r\left\Vert\frac{\partial}{\partial r}\left\{\widehat{d\sigma}(r\cdot \pi')\right\}\right\Vert^2_{\textnormal{op}} dr<\infty.
\end{equation}
\end{itemize}
In the case of $\mathbb{R}^n$ the curvature of the sphere causes a decay of the Fourier transform $\widehat{d\sigma}$ at infinity allowing the convergence of the integral \eqref{curvature:intuition} when $n\geq 3$.

We note that if one wants to extend this criterion to other  $L^{p}(G)$ spaces one could impose conditions on the decay of the derivative $\frac{\partial}{\partial r}\{\widehat{d\sigma}(r\cdot \pi')\},$ indeed taking inspiration from the H\"ormander-Mihlin theorem on graded Lie groups, see Fischer and Ruzhansky \cite{FischerRuzhansky2021}.  With this kind of criteria in mind, the analysis above suggests that the condition (iii) in Definition \ref{Admissible:measure} could be a natural replacement for the curvature conditions available in the Euclidean setting. Since we use Rockland operators in this formulation the group $G$ is graded, see \cite[Page 172]{FischerRuzhanskyBook}. The hypothesis (ii) in  Definition \ref{Admissible:measure} is an analogy of the standard hypothesis  $|\widehat{d\sigma}(\xi)|\leq C|\xi|^{-a},$ $a>0,$ studied e.g. in the Euclidean setting in  \cite{Duoandikoetxea:RubioDeFrancia:86} by  Duoandikoetxea and  Rubio de Francia. Note that (i) in   Definition \ref{Admissible:measure} is a reasonable topological condition.

\subsubsection{About the differentiability of $t\mapsto \widehat{d\sigma}(t\cdot \pi)$}\label{Differentiability:d:sigma}
We finish this section by discussing the differentiability of the mapping $t\mapsto \widehat{d\sigma}(t\cdot \pi)v,$ on the subspace $ H_{\pi}^\infty$ of G\r{a}rding vectors $v.$ By following Kirillov \cite{Kirillov1962}, each representation space $H_{\pi}$ can be identified with $L^{2}(\mathbb{R}^{k(\pi)}),$ and also $H_{\pi}^\infty\cong \mathscr{S}(\mathbb{R}^{k(\pi)})$ can be identified with the Schwartz class on $\mathbb{R}^{k(\pi)}.$ Consequently, we deduce that $H_{\pi}^\infty,$ is a dense subspace of $H_{\pi}$ with respect to the topology induced by the norm in $H_{\pi}.$ We refer the reader to \cite[Page 42]{FischerRuzhansky2021} for a famous proof of this fact due to G\r{a}rding. 
 
Note that 
\begin{equation}\label{Derivative:discussion}
    \widehat{d\sigma}(t\cdot \pi)=\smallint_{G}\pi(t\cdot x)^{*}d\sigma(x)=\smallint_{G}\pi(t\cdot x)^{*}K(x)dx=\smallint_{G}\pi(t\cdot x)K(x^{-1})dx,
\end{equation}where we have written $d\sigma(x)=Kdx$ in view of the  Riesz-Representation theorem. Let us write \eqref{Derivative:discussion} in exponential coordinates $x=\exp(X),$ 
\begin{align*}
    \widehat{d\sigma}(t\cdot \pi) &=  \frac{d}{d t} \smallint_{\mathfrak{g}}\pi(t\cdot \exp(X))K((\exp{X})^{-1})|\det[D\exp(X)]|dX\\
    &=  \smallint_{\mathfrak{g}}\pi( \exp(t\cdot X))K((\exp{X})^{-1})|\det[D\exp(X)]|dX.
\end{align*} Since the group $G$ is graded, relative to the decomposition  $  \mathfrak{g}=\mathfrak{g}_{1}\oplus\mathfrak{g}_{2}\oplus \cdots \oplus \mathfrak{g}_{s},$ we have a unique representation $X=\sum_{i=1}^{s}\alpha_{i}(X)X_i,$ where $X_i\in \mathfrak{g}_{i}. $ and the $\alpha_{i}(X)$'s are scalars for $1\leq i\leq s.$  Note that 
$$ t\mapsto (t\cdot X)= \sum_{i=1}^{s}t^{\nu_i}\alpha_{i}(X)X_i=F_X(t)$$ is a $C^{\infty}$ mapping on $\mathbb{R}^+.$ We also recall that on the set of smooth vectors $H_\pi^{\infty},$ the mapping $x\mapsto \pi(x)v,$ is smooth for any $v\in H_{\pi}^{\infty}.$ With the notation above we have that
\begin{align*}
     \widehat{d\sigma}(t\cdot \pi) v= \smallint_{\mathfrak{g}}\pi( \exp[F_X(t)])vK((\exp{X})^{-1})|\det[D\exp(X)]|dX.
\end{align*} Since $ \pi( \exp[F_X(\cdot)])v$ is obtained by the composition $t\mapsto F_X(t)\mapsto \pi( \exp(F_X(t)))v,  $ it is a smooth mapping on $\mathbb{R}^+$. Note also that $K$ has compact support, allowing the mapping $\widehat{d\sigma}(t\cdot \pi) v$ to be smooth in $t>0.$ Then, by defining $\frac{d}{dt}\widehat{d\sigma}(t\cdot \pi):H_{\pi}^{\infty}\rightarrow H_{\pi},$ by $\frac{d}{dt}\widehat{d\sigma}(t\cdot \pi)v:= \frac{d}{dt}\{\widehat{d\sigma}(t\cdot \pi)v\},$ the operator $\frac{d}{dt}\widehat{d\sigma}(t\cdot \pi)$ can be realised as a densely defined operator on $H_\pi.$

\section{Proof of the main Theorem \ref{Maximal:Function:Graded}  }\label{Proof}

Let us prove that $M_{F}^{d\sigma}: L^{p_D,1}(G)\rightarrow L^{p_D,\infty}(G) $ admits a bounded extension, or  equivalently, that $M_{F}^{d\sigma}$ is of restricted weak  $(p_D,1)$ type for $p_D=\frac{D}{D-c}.$ This is the statement (1) of Theorem \ref{Maximal:Function:Graded}.  Then, interpolating the estimate  $M_{F}^{d\sigma}: L^{p_D,1}(G)\rightarrow L^{p_D,\infty}(G), $  with the $L^{\infty}(G)$-$L^{\infty}(G)$ boundedness of $M_{F}^{d\sigma},$  we get that $M_{F}^{d\sigma}: L^{p}(G)\rightarrow L^{p}(G) $ is bounded for all $p_D<p\leq \infty,$ (see Remark \ref{remark:interpolation}) which is the statement (2) of Theorem \ref{Maximal:Function:Graded}.

\subsection{Reduction of the problem}
 We are going to split the measure $d\sigma=d\sigma_1+d\sigma_2,$ with $\widehat{d\sigma}_1$ localising the low frequencies of the Fourier transform $\widehat{d\sigma}$ and with $\widehat{d\sigma}_2$ concentrating its high frequencies. 

To do this, let us consider a smooth function $\phi\geq 0$ supported on the interval $\{t\in \mathbb{R}:0\leq t\leq 2\}$ with $\phi(t)=1$ for $0\leq t\leq 1.$ For our further analysis, for a fixed $\beta>0,$ let us define the operator $\phi_\beta(\mathcal{R}^{1/\nu})$ by the functional calculus of Rockland operators, and consider its right convolution kernel
$\Phi_\beta= \phi_\beta(\mathcal{R}^{1/\nu})\delta.$ We denote $\Phi=\Phi_1,$ when $\beta=1$ for simplicity and note that in view of  \eqref{Fundamental:lemmaCZ:graded}, $\Phi_{\beta}=\beta^{-Q}\Phi(\beta^{-1}\cdot).$

Let us split the measure $d\sigma$ as follows 
\begin{equation}
   d\sigma= \Phi_\tau*d\sigma+[(\delta-\Phi_\tau)\ast d\sigma],\,d\sigma_{1}:=\Phi_\tau*d\sigma, 
\end{equation}   where the parameter $\tau,$ satisfying for now that $\tau^{-1}\gg 1$ will be chosen later. Let $f\in C^{\infty}_0(G).$
Let us consider the associated maximal functions for each measure in this partition, respectively,
\begin{equation}
    M_1 f(x):=\sup_{t>0}|f\ast (\Phi_\tau*d\sigma)_t (x)|,
\end{equation} and 
\begin{equation}
    M_2 f(x):=\sup_{t>0}|f\ast [(\delta-\Phi_\tau)\ast d\sigma]_t (x)|.
\end{equation}
In what follows, we shall investigate the $L^2$-theory of the operator $M_2$ and the $L^1$-theory of $M_1.$ 

The {\it curvature conditions} (ii) and (iii) in Definition \ref{Admissible:measure} will allow us to estimate the $L^2$-operator norm of $M_2.$ On the other hand, since the $L^1$-theory for $M_1$ is related to the growth properties of the measure $d\sigma$, in our further analysis we prove the weak (1,1) boundedness of $M_1$ using the property (i) in Definition \ref{Admissible:measure}. Once estimated the operator norms of $M_1$ and of $M_2,$ we establish the restricted weak estimate for $M_F^{d\sigma}$ in Subsection \ref{proof:subsection:full} which proves (1) in Theorem \ref{Maximal:Function:Graded}. 

\subsection{The $L^2$-theory for $M_2$} Now we are going to estimate the $L^2$-operator norm of $M_2.$ For this we require the {\it curvature conditions} \eqref{FT;Condition:Measure} and \eqref{Ft;Condition:derivative:measure}. We are going to use the inequality
$$ \sup_{t>0}\frac{1}{2}|F(x,t)|^2\leq \sup_{t>0}\left|\smallint_0^tF(x,s)F'(x,s)ds\right|\leq \smallint\limits_{0}^\infty |F(x,s)||F'(x,s)|ds $$
$$\leq \left( \smallint\limits_{0}^\infty |F(x,s)|^2\frac{ds}{s}\right)^{\frac{1}{2}}\left( \smallint\limits_{0}^\infty |sF'(x,s)|^2\frac{ds}{s}\right)^{\frac{1}{2}},$$
where $F'(x,s):=\frac{d}{ds}F(x,s),$ applied to 
$$F(x,s)=f\ast [(\delta-\Phi_\tau)\ast d\sigma]_t (x):=f\ast t^{-Q}[(\delta-\Phi_\tau)\ast d\sigma](t^{-1}\cdot x).$$ Observe that
$$  \sup_{t>0}\frac{1}{2}|F(x,t)|^2=\frac{1}{2}M_2f(x)^2.  $$
Consider the functions
\begin{equation}
    G_1f(x)= \left( \smallint\limits_{0}^\infty |F(x,s)|^2\frac{ds}{s}\right)^{\frac{1}{2}},
\end{equation}
and 
\begin{equation}
    G_2f(x)= \left( \smallint\limits_{0}^\infty |sF'(x,s)|^2\frac{ds}{s}\right)^{\frac{1}{2}}.
\end{equation}
Then
\begin{align*}
    \frac{1}{2}\smallint_G M_2f(x)^2dx\leq \smallint_G G_1(x)G_2(x)dx\leq \left(\smallint_G G_1f(x)^2dx\right)^{\frac{1}{2}}\left(\smallint_G G_2f(x)^2dx\right)^{\frac{1}{2}}=I_1\cdot I_2,
\end{align*}
where 
\begin{equation*}
    I_1:=\left(\smallint_G G_1f(x)^2dx\right)^{\frac{1}{2}},\,I_2:=\left(\smallint_G G_2f(x)^2dx\right)^{\frac{1}{2}}.
\end{equation*}

To estimate the $L^2$-norm of $M_2f,$ we are going to estimate the norms $I_1,$ and $I_2,$ respectively, in the following lemmas.

\begin{lemma}\label{Lemma:1} The Fourier transform condition
$$ \sup_{\pi'\in \widehat{G}}\Vert \widehat{d\sigma}(\pi')\pi'(\mathcal{R})^{\frac{a}{\nu}}\Vert_{\textnormal{op}}<\infty,$$
    implies that $I_1\lesssim \tau^{a}\Vert f\Vert_{L^2(G)}.$
\end{lemma}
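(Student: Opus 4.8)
The plan is to pass to the Fourier side and reduce the whole quantity to a scalar integral in the spectral variable of $\mathcal{R}$. First I would apply Tonelli's theorem to interchange the $x$- and $s$-integrals in $I_1^2=\int_G\int_0^\infty|F(x,s)|^2\frac{ds}{s}\,dx$ and, for each fixed $s$, invoke the Plancherel theorem on $G$ to obtain
$$I_1^2=\int_0^\infty\int_{\widehat{G}}\Vert\widehat{F}(\pi,s)\Vert_{\textnormal{HS}}^2\,d\pi\,\frac{ds}{s}.$$
Recalling that $F(\cdot,s)=f\ast[(\delta-\Phi_\tau)\ast d\sigma]_s$ and that the group Fourier transform sends a convolution to the reversed product of the symbols, together with the dilation identity \eqref{Eq:dilatedFourier}, I would identify
$$\widehat{F}(\pi,s)=\widehat{d\sigma}(s\cdot\pi)\bigl(I-\widehat{\Phi_\tau}(s\cdot\pi)\bigr)\widehat{f}(\pi),$$
where, by Lemma \ref{dilationsrepre} and $(s\cdot\pi)(\mathcal{R})=s^{\nu}\pi(\mathcal{R})$, the cutoff $\widehat{\Phi_\tau}(s\cdot\pi)=\phi(\tau s\,\pi(\mathcal{R})^{1/\nu})$ is a bounded Borel function of $\pi(\mathcal{R})$.

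The key algebraic step is to peel off the single factor that is \emph{not} a function of $\pi(\mathcal{R})$. I would write
$$\widehat{d\sigma}(s\cdot\pi)\bigl(I-\widehat{\Phi_\tau}(s\cdot\pi)\bigr)=\bigl[\widehat{d\sigma}(s\cdot\pi)\,(s\cdot\pi)(\mathcal{R})^{\frac{a}{\nu}}\bigr]\,g_s(\pi(\mathcal{R})),\qquad g_s(\lambda):=s^{-a}\lambda^{-\frac{a}{\nu}}\bigl(1-\phi(\tau s\lambda^{1/\nu})\bigr),$$
where $g_s$ is a bounded Borel function (set $g_s(0):=0$, which is licit since $1-\phi$ vanishes near the origin). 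By the hypothesis applied at $\pi'=s\cdot\pi\in\widehat{G}$, the bracketed factor has operator norm at most $C_0:=\sup_{\pi'\in\widehat{G}}\Vert\widehat{d\sigma}(\pi')\pi'(\mathcal{R})^{a/\nu}\Vert_{\textnormal{op}}<\infty$, uniformly in $s$ and $\pi$. Hence $\Vert\widehat{F}(\pi,s)\Vert_{\textnormal{HS}}\leq C_0\Vert g_s(\pi(\mathcal{R}))\widehat{f}(\pi)\Vert_{\textnormal{HS}}$, and because $g_s(\pi(\mathcal{R}))$ sits inside the spectral calculus of $\pi(\mathcal{R})$ I can integrate in $s$ before integrating over $\widehat{G}$:
$$\int_0^\infty\Vert g_s(\pi(\mathcal{R}))\widehat{f}(\pi)\Vert_{\textnormal{HS}}^2\frac{ds}{s}=\textnormal{Tr}\Bigl[\widehat{f}(\pi)^*\Bigl(\int_0^\infty|g_s(\pi(\mathcal{R}))|^2\frac{ds}{s}\Bigr)\widehat{f}(\pi)\Bigr]\leq\Bigl\Vert\int_0^\infty|g_s(\pi(\mathcal{R}))|^2\frac{ds}{s}\Bigr\Vert_{\textnormal{op}}\Vert\widehat{f}(\pi)\Vert_{\textnormal{HS}}^2.$$

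By \eqref{op:inequality} the surviving operator norm is dominated by $\sup_{\lambda\geq0}\int_0^\infty|g_s(\lambda)|^2\frac{ds}{s}$, and here the change of variables $u=\tau s\lambda^{1/\nu}$ makes the $\lambda$-dependence cancel exactly, leaving
$$\int_0^\infty|g_s(\lambda)|^2\frac{ds}{s}=\tau^{2a}\int_0^\infty u^{-2a}\,|1-\phi(u)|^2\,\frac{du}{u},$$
a finite constant depending only on $\phi$ and $a$: the integrand is supported in $u\geq1$ because $\phi\equiv1$ on $[0,1]$, which removes any singularity at the origin, and it decays like $u^{-2a-1}$ at infinity, integrable precisely because $a>0$. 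Inserting this bound and using the Plancherel theorem on $G$ once more gives $I_1^2\lesssim\tau^{2a}\Vert f\Vert_{L^2(G)}^2$, that is $I_1\lesssim\tau^{a}\Vert f\Vert_{L^2(G)}$. The step I expect to be the main obstacle — and the reason a naive estimate fails — is the $s$-integration: the operator norm of $\widehat{d\sigma}(s\cdot\pi)\bigl(I-\widehat{\Phi_\tau}(s\cdot\pi)\bigr)$ is $O(\tau^{a})$ uniformly in $s$ but does \emph{not} decay as $s\to0^+$ or $s\to\infty$, so a bound taken outside the integral would leave the divergent factor $\int_0^\infty\frac{ds}{s}$. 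The resolution is exactly to keep the $s$-integral inside the functional calculus of $\pi(\mathcal{R})$, as above, so that the scaling symmetry $u=\tau s\lambda^{1/\nu}$ turns the dangerous $\frac{ds}{s}$ integral into a convergent dimensionless one while simultaneously extracting the sharp power $\tau^{2a}$.
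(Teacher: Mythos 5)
Your proposal is correct and follows essentially the same route as the paper's proof: Plancherel, the factorization $\widehat{F}(\pi,s)=\bigl[\widehat{d\sigma}(s\cdot\pi)(s\cdot\pi)(\mathcal{R})^{a/\nu}\bigr]\,g_s(\pi(\mathcal{R}))\widehat{f}(\pi)$ controlled by the hypothesis, and Fubini to push the $s$-integral inside the spectral calculus of $\pi(\mathcal{R})$, reducing everything to the scalar integral $\sup_{\lambda}\smallint_0^\infty|g_s(\lambda)|^2\frac{ds}{s}\sim\tau^{2a}$. Your closed-form substitution $u=\tau s\lambda^{1/\nu}$ is a slightly cleaner way of evaluating the same integral the paper estimates by restricting to $s\gtrsim\tau^{-1}\lambda^{-1}$ on the support of $1-\phi$.
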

Assuming that the symbol $s\frac{d}{ds}\{\widehat{d\sigma}(s\cdot \pi)\}$ decay with order $a-\varepsilon_0,$ we estimate $I_2$ as follows.

\begin{lemma}\label{Lemma:2} The Fourier transform condition
\begin{equation*}
\Sigma_{\varepsilon_0}:= \sup_{s>0;\pi\in \widehat{G}}   \Vert  s\frac{d}{ds}\{\widehat{d\sigma}(s\cdot \pi)\}[(s\cdot \pi)(\mathcal{R})]^{\frac{a-\varepsilon_0}{\nu}}\Vert_{\textnormal{op}}<\infty.
\end{equation*} implies that $I_1\lesssim \tau^{a-\varepsilon_0}\Vert f \Vert_{L^2(G)}.$    
\end{lemma}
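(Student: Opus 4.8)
The plan is to pass to the Fourier–spectral side and, since hypothesis \eqref{Ft;Condition:derivative:measure} only controls the \emph{derivative} of $s\mapsto\widehat{d\sigma}(s\cdot\pi)$, to recover $\widehat{d\sigma}(s\cdot\pi)$ itself from that derivative on the high-frequency band isolated by $\delta-\Phi_\tau$. First I would record, using the convolution rule for the group Fourier transform, the dilation identity \eqref{Eq:dilatedFourier} and Lemma \ref{dilationsrepre}, that
\[
\widehat{F}(\pi,s)=\widehat{d\sigma}(s\cdot\pi)\big(\textnormal{Id}-\phi(s\tau\,\pi(\mathcal{R})^{1/\nu})\big)\widehat{f}(\pi),
\]
where the Fourier multiplier $\textnormal{Id}-\phi(s\tau\,\pi(\mathcal{R})^{1/\nu})$ is, by the support of $\phi$, concentrated on the spectral region $\pi(\mathcal{R})^{1/\nu}\geq (s\tau)^{-1}$. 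Applying the Plancherel theorem on $G$ for each fixed $s$ together with Tonelli reduces the task to
\[
I_1^2=\smallint_{\widehat{G}}\smallint_0^\infty \frac{1}{s}\Vert\widehat{F}(\pi,s)\Vert_{\textnormal{HS}}^2\,ds\,d\pi .
\]

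The crucial step is to reconstruct $\widehat{d\sigma}(s\cdot\pi)$ by the fundamental theorem of calculus,
\[
\widehat{d\sigma}(s\cdot\pi)=-\smallint_s^\infty \frac{d}{dr}\big\{\widehat{d\sigma}(r\cdot\pi)\big\}\,dr,
\]
understood on the dense subspace of G\r{a}rding vectors (Subsection \ref{Differentiability:d:sigma}) and ultimately tested against the localized vector $(\textnormal{Id}-\phi(s\tau\,\pi(\mathcal{R})^{1/\nu}))\widehat{f}(\pi)$. Factoring $r\frac{d}{dr}\{\widehat{d\sigma}(r\cdot\pi)\}=\big[r\frac{d}{dr}\{\widehat{d\sigma}(r\cdot\pi)\}(r\cdot\pi)(\mathcal{R})^{\frac{a-\varepsilon_0}{\nu}}\big]\,r^{-(a-\varepsilon_0)}\pi(\mathcal{R})^{-\frac{a-\varepsilon_0}{\nu}}$ and bounding the bracket in operator norm by $\Sigma_{\varepsilon_0}$, the $r$-integral collapses to $\int_s^\infty r^{-(a-\varepsilon_0)-1}\,dr\simeq s^{-(a-\varepsilon_0)}$, convergent precisely because $a>\varepsilon_0$. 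Since the surviving negative power is tamed by the cutoff, this gives
\[
\Vert\widehat{F}(\pi,s)\Vert_{\textnormal{HS}}\lesssim \Sigma_{\varepsilon_0}\,s^{-(a-\varepsilon_0)}\big\Vert \pi(\mathcal{R})^{-\frac{a-\varepsilon_0}{\nu}}\big(\textnormal{Id}-\phi(s\tau\,\pi(\mathcal{R})^{1/\nu})\big)\widehat{f}(\pi)\big\Vert_{\textnormal{HS}}.
\]

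Finally I would diagonalize via the spectral measure of $\pi(\mathcal{R})$: setting $d\mu_\pi(\lambda)=d\,\textnormal{Tr}[\widehat{f}(\pi)^{*}E_{\pi(\mathcal{R})}(\lambda)\widehat{f}(\pi)]$, a positive measure of total mass $\Vert\widehat{f}(\pi)\Vert_{\textnormal{HS}}^2$, the bound above yields
\[
\smallint_0^\infty\frac{1}{s}\Vert\widehat{F}(\pi,s)\Vert_{\textnormal{HS}}^2\,ds\lesssim \Sigma_{\varepsilon_0}^2\smallint_0^\infty\Big(\smallint_0^\infty \frac{s^{-2(a-\varepsilon_0)}}{s}\lambda^{-\frac{2(a-\varepsilon_0)}{\nu}}\big(1-\phi(s\tau\lambda^{1/\nu})\big)^2\,ds\Big)\,d\mu_\pi(\lambda).
\]
The substitution $u=s\tau\lambda^{1/\nu}$ turns the inner integral into $\tau^{2(a-\varepsilon_0)}\int_0^\infty u^{-2(a-\varepsilon_0)-1}(1-\phi(u))^2\,du$, a finite constant times $\tau^{2(a-\varepsilon_0)}$ that is \emph{independent of} $\lambda$ (finiteness at $\infty$ uses $a>\varepsilon_0$, and there is no singularity near $0$ because $1-\phi$ vanishes there). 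Integrating the resulting mass over $\widehat{G}$ and invoking Plancherel once more gives $I_1\lesssim\tau^{a-\varepsilon_0}\Vert f\Vert_{L^2(G)}$, as claimed.

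The main difficulty I anticipate lies in justifying the fundamental-theorem-of-calculus representation, that is, the vanishing of the boundary term $\lim_{r\to\infty}\widehat{d\sigma}(r\cdot\pi)$ on the localized subspace. The derivative estimate already makes $r\mapsto\widehat{d\sigma}(r\cdot\pi)$ absolutely continuous with integrable derivative on $[s,\infty)$ once restricted to the range of $\textnormal{Id}-\phi(s\tau\,\pi(\mathcal{R})^{1/\nu})$, so the limit exists; that it is $0$ I would obtain from a Riemann--Lebesgue-type decay of the matrix coefficients $\langle\pi(r\cdot x)v,w\rangle$ as $r\to\infty$ for nontrivial $\pi$, using the compact support of $d\sigma$ and the density of $H_\pi^\infty$ (the trivial representation is harmless, being annihilated by the cutoff since $\pi_{\textnormal{triv}}(\mathcal{R})=0$). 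This is exactly the mechanism that converts the bound on the \emph{derivative} of $\widehat{d\sigma}(s\cdot\pi)$ into control of the non-differentiated square function $I_1$.
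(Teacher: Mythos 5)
You have proved the wrong inequality, and the root of the problem is a typo in the lemma as printed: the conclusion of Lemma \ref{Lemma:2} should read $I_2\lesssim\tau^{a-\varepsilon_0}\Vert f\Vert_{L^2(G)}$, not $I_1$. This is clear from the sentence introducing it (``we estimate $I_2$ as follows'') and from how it is used: the proof of Theorem \ref{Maximal:Function:Graded} needs the pair $I_1\lesssim\tau^{a}\Vert f\Vert_{L^2(G)}$ (Lemma \ref{Lemma:1}) and $I_2\lesssim\tau^{a-\varepsilon_0}\Vert f\Vert_{L^2(G)}$ to reach \eqref{M:2:op:norm}, whereas a second bound on $I_1$ (weaker than Lemma \ref{Lemma:1}'s, since $\tau\ll1$ gives $\tau^{a}\leq\tau^{a-\varepsilon_0}$) would leave the derivative square function $I_2$ completely uncontrolled, and the estimate for $M_2$ could not close. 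The paper's proof differentiates $\widehat{F}(\pi,s)=\widehat{d\sigma}(s\cdot\pi)(I_{H_\pi}-\widehat{\Phi}_\tau(s\cdot\pi))\widehat{f}(\pi)$ by the Leibniz rule,
\begin{equation*}
\widehat{F}'(\pi,s)=\Big[\frac{d}{ds}\{\widehat{d\sigma}(s\cdot \pi)\}(I_{H_\pi}-\widehat{\Phi}_\tau(s\cdot \pi))+\widehat{d\sigma}(s\cdot \pi)\frac{d}{ds}\{I_{H_\pi}-\widehat{\Phi}_\tau(s\cdot \pi)\}\Big]\widehat{f}(\pi),
\end{equation*}
splits $I_2\leq I_{2,1}+I_{2,2}$, estimates $I_{2,1}$ by rerunning the Lemma \ref{Lemma:1} computation with $a$ replaced by $a-\varepsilon_0$ (the only place where $\Sigma_{\varepsilon_0}$ enters), and estimates $I_{2,2}$ --- where the derivative hits the cutoff, producing $\phi'(s\tau\,\pi(\mathcal{R})^{1/\nu})\pi(\mathcal{R})^{1/\nu}\tau$, spectrally supported where $s\tau\lambda\sim1$ --- using the hypothesis \eqref{FT;Condition:Measure} of Lemma \ref{Lemma:1}, obtaining $I_{2,2}\lesssim\tau^{a}\Vert f\Vert_{L^2(G)}\leq\tau^{a-\varepsilon_0}\Vert f\Vert_{L^2(G)}$. (The printed statement is imprecise in this respect too: the paper's own proof of this lemma uses condition (ii) alongside (iii).) Your spectral factorization and the substitution $u=s\tau\lambda^{1/\nu}$ are exactly the machinery of the paper's $I_{2,1}$ estimate, but since you never differentiate $F$ in $s$, your argument cannot be adapted to what the lemma is actually needed for.

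Even as a proof of the literal $I_1$ statement, there is a genuine gap at exactly the step you flagged, and it is not repairable, because the literal statement is false: condition \eqref{Ft;Condition:derivative:measure} alone does not imply $I_1<\infty$. Take $d\sigma=\delta_e$, the point mass at the identity (a finite, compactly supported Borel measure). Then $\widehat{d\sigma}(s\cdot\pi)=I_{H_\pi}$ for all $s>0$, so $\Sigma_{\varepsilon_0}=0$ and the hypothesis holds trivially; yet $\widehat{F}(\pi,s)=(I_{H_\pi}-\phi(s\tau\,\pi(\mathcal{R})^{1/\nu}))\widehat{f}(\pi)$, and since $1-\phi(s\tau\lambda)=1$ for $s\geq 2/(\tau\lambda)$, the integral $\int_0^\infty\Vert\widehat{F}(\pi,s)\Vert_{\textnormal{HS}}^2\,\frac{ds}{s}$ diverges, so $I_1=+\infty$ for every $f\neq0$ (already on $G=\mathbb{R}^n$ with $d\sigma=\delta_0$ one has $I_1^2=\int|\widehat{f}(\xi)|^2\int_0^\infty(1-\phi(s\tau|\xi|))^2\frac{ds}{s}\,d\xi=+\infty$). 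This shows your fundamental-theorem-of-calculus representation $\widehat{d\sigma}(s\cdot\pi)=-\int_s^\infty\frac{d}{dr}\{\widehat{d\sigma}(r\cdot\pi)\}dr$ must fail under hypothesis (iii) alone: for $\delta_e$ the left side is $I_{H_\pi}$ and the right side is $0$. Your proposed fix, a Riemann--Lebesgue type decay of matrix coefficients, is unavailable precisely because $d\sigma$ is a general (typically singular) measure --- Riemann--Lebesgue needs an integrable density, and surface-type measures are the whole point of the paper. What does force the boundary term to vanish is condition \eqref{FT;Condition:Measure}: in the form used in Lemma \ref{Lemma:1} it gives $\Vert\widehat{d\sigma}(r\cdot\pi)\pi(\mathcal{R})^{a/\nu}\Vert_{\textnormal{op}}=r^{-a}\Vert\widehat{d\sigma}(r\cdot\pi)[(r\cdot\pi)(\mathcal{R})]^{a/\nu}\Vert_{\textnormal{op}}\lesssim r^{-a}\to0$ on a dense range. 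But once you assume (ii), your argument only reproves Lemma \ref{Lemma:1} with the worse exponent $a-\varepsilon_0$, which is redundant and still leaves $I_2$, hence Theorem \ref{Maximal:Function:Graded}, out of reach.
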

Below we present the proof of these lemmas. Assuming these statements for a moment, note that  they imply the estimate
\begin{equation}
     \frac{1}{2}\smallint_G M_2f(x)^2dx\lesssim I_1 I_2\lesssim \tau^{a} \Vert f\Vert_{L^2(G)}\tau^{a-\varepsilon_0}\Vert f\Vert_{L^2(G)}=\tau^{2a-\varepsilon_0}\Vert f\Vert_{L^2(G)}^2.
\end{equation} In consequence we have the operator inequality
\begin{equation}\label{M:2:op:norm}
   \boxed{ \Vert  M_2f\Vert_{L^2(G)}\lesssim \tau^{\frac{2a-\varepsilon_0}{2}}\Vert f\Vert_{L^2(G)}}
\end{equation}
\begin{proof}[Proof of Lemma \ref{Lemma:1}]
Using Plancherel theorem, we have that 
\begin{align*}
    I_1^2:= &\smallint_G G_1f(x)^2dx=\smallint_G \smallint\limits_{0}^\infty |F(x,s)|^2\frac{ds}{s}dx= \smallint\limits_{0}^\infty \smallint_G|F(x,s)|^2 dx\frac{ds}{s}\\
    &=  \smallint\limits_{0}^\infty \smallint_{\widehat{G}}\Vert \widehat{F}(\pi,s)\Vert^2_{\textnormal{HS}} d\pi \frac{ds}{s}\\
    &=\smallint\limits_{0}^\infty \smallint_{\widehat{G}}\Vert \mathscr{F}[f\ast [(\delta-\Phi_\tau)\ast d\sigma]_t (\cdot)](\pi)\Vert^2_{\textnormal{HS}} d\pi \frac{ds}{s}\\
    &=\smallint\limits_{0}^\infty \smallint_{\widehat{G}}\Vert \mathscr{F}[ [(\delta-\Phi_\tau)\ast d\sigma]_t (\cdot)](\pi)\widehat{f}(\pi)\Vert^2_{\textnormal{HS}} d\pi \frac{ds}{s}\\
    &=\smallint\limits_{0}^\infty \smallint_{\widehat{G}}\Vert \mathscr{F}[ (\delta-\Phi_\tau)\ast d\sigma](s\cdot \pi)\widehat{f}(\pi)\Vert^2_{\textnormal{HS}} d\pi \frac{ds}{s}\\
    &=\smallint\limits_{0}^\infty \smallint_{\widehat{G}}\Vert \widehat{d\sigma}(s\cdot \pi) \mathscr{F}[ (\delta-\Phi_\tau)](s\cdot \pi)\widehat{f}(\pi)\Vert^2_{\textnormal{HS}} d\pi \frac{ds}{s}.
\end{align*}Now, using our Fourier transform condition
\begin{align*}
   \Sigma:= \sup_{\pi'\in \widehat{G}}\Vert \widehat{d\sigma}(\pi')\pi'(\mathcal{R})^{\frac{a}{\nu}}\Vert_{\textnormal{op}}<\infty,
\end{align*}with the variable $\pi'=s\cdot \pi$ given by the $s$-dilations of the representations, we have that 
\begin{align*}
    &I_1^2\\
    &= \smallint\limits_{0}^\infty \smallint_{\widehat{G}}\Vert \widehat{d\sigma}(s\cdot \pi) \mathscr{F}[ (\delta-\Phi_\tau)](s\cdot \pi)\widehat{f}(\pi)\Vert^2_{\textnormal{HS}} d\pi \frac{ds}{s}\\
    &= \smallint\limits_{0}^\infty \smallint_{\widehat{G}}\Vert \widehat{d\sigma}(s\cdot \pi) [(s\cdot \pi)(\mathcal{R})]^{\frac{a}{\nu}} [(s\cdot \pi)(\mathcal{R})]^{-\frac{a}{\nu}} \mathscr{F}[ (\delta-\Phi_\tau)](s\cdot \pi)\widehat{f}(\pi)\Vert^2_{\textnormal{HS}} d\pi \frac{ds}{s}\\
    &\leq  \smallint\limits_{0}^\infty \smallint_{\widehat{G}}\Vert \widehat{d\sigma}(s\cdot \pi) [(s\cdot \pi)(\mathcal{R})]^{\frac{a}{\nu}}\Vert_{\textnormal{op}}^2\Vert [(s\cdot \pi)(\mathcal{R})]^{-\frac{a}{\nu}} \mathscr{F}[ (\delta-\Phi_\tau)](s\cdot \pi)\widehat{f}(\pi)\Vert^2_{\textnormal{HS}} d\pi \frac{ds}{s}\\
    &\leq \Sigma^{2} \smallint\limits_{0}^\infty \smallint_{\widehat{G}}\Vert  [(s\cdot \pi)(\mathcal{R})]^{-\frac{a}{\nu}} \mathscr{F}[ (\delta-\Phi_\tau)](s\cdot \pi)\widehat{f}(\pi)\Vert^2_{\textnormal{HS}} d\pi \frac{ds}{s}.
\end{align*} Observe that
\begin{align*}
     &\smallint\limits_{0}^\infty \smallint_{\widehat{G}}\Vert  [(s\cdot \pi)(\mathcal{R})]^{-\frac{a}{\nu}} \mathscr{F}[ (\delta-\Phi_\tau)](s\cdot \pi)\widehat{f}(\pi)\Vert^2_{\textnormal{HS}} d\pi \frac{ds}{s}\\
     &= \smallint\limits_{0}^\infty \smallint_{\widehat{G}}\Vert  [(s\cdot \pi)(\mathcal{R})]^{-\frac{a}{\nu}}  [I_{H_{s\cdot \pi}}-\widehat{\Phi_\tau}(s\cdot \pi)]\widehat{f}(\pi)\Vert^2_{\textnormal{HS}} d\pi \frac{ds}{s}\\
     &= \smallint\limits_{0}^\infty \smallint_{\widehat{G}}\Vert  [(s\cdot \pi)(\mathcal{R})]^{-\frac{a}{\nu}}  [I_{H_{s\cdot \pi}}-\widehat{\Phi}(\tau \cdot s\cdot \pi)]\widehat{f}(\pi)\Vert^2_{\textnormal{HS}} d\pi \frac{ds}{s}\\
     &= \smallint\limits_{0}^\infty \smallint_{\widehat{G}}\Vert  [(s\cdot \pi)(\mathcal{R})]^{-\frac{a}{\nu}}  [I_{H_{s\cdot \pi}}-\widehat{\Phi}(\tau  s\cdot \pi)]\widehat{f}(\pi)\Vert^2_{\textnormal{HS}} d\pi \frac{ds}{s}\\
     &= \smallint\limits_{0}^\infty \smallint_{\widehat{G}}\Vert  [(s\cdot \pi)(\mathcal{R})]^{-\frac{a}{\nu}}  [I_{H_{ \pi}}-\Phi((\tau  s\cdot \pi)(\mathcal{R}^{\frac{1}{\nu}} ))]\widehat{f}(\pi)\Vert^2_{\textnormal{HS}} d\pi \frac{ds}{s}\\
     &= \smallint\limits_{0}^\infty \smallint_{\widehat{G}}\Vert  [(s\cdot \pi)(\mathcal{R})]^{-\frac{a}{\nu}}  [I_{H_{ \tau s\cdot \pi}}-\Phi((\tau  s\cdot \pi)(\mathcal{R}^{\frac{1}{\nu}} ))]\widehat{f}(\pi)\Vert^2_{\textnormal{HS}} d\pi \frac{ds}{s}\\
     &= \smallint\limits_{0}^\infty \smallint_{\widehat{G}}\Vert  [(s\cdot \pi)(\mathcal{R})]^{-\frac{a}{\nu}}  [(1-\Phi)((\tau  s\cdot \pi)(\mathcal{R}^{\frac{1}{\nu}} ))]\widehat{f}(\pi)\Vert^2_{\textnormal{HS}} d\pi \frac{ds}{s},
\end{align*}where we have used tha $I_{H_{ \pi}}=I_{H_{ s\cdot \pi}}=I_{H_{ \tau s\cdot \pi}},$ for any $s>0,$ since the  representations $s\cdot \pi=\pi(s\cdot )$ are acting on the same representation space $H_{\pi}.$ Observe that 
\begin{align*}
    &\smallint\limits_{0}^\infty \smallint_{\widehat{G}}\Vert  [(s\cdot \pi)(\mathcal{R})]^{-\frac{a}{\nu}}  [(1-\Phi)((\tau  s\cdot \pi)(\mathcal{R}^{\frac{1}{\nu}} ))]\widehat{f}(\pi)\Vert^2_{\textnormal{HS}} d\pi \frac{ds}{s}\\
    &= \smallint\limits_{0}^\infty \smallint_{\widehat{G}}\Vert  [s^\nu \pi(\mathcal{R})]^{-\frac{a}{\nu}}  [(1-\Phi)(\tau^\nu  s^\nu \pi(\mathcal{R} ))^{\frac{1}{\nu}}]\widehat{f}(\pi)\Vert^2_{\textnormal{HS}} d\pi \frac{ds}{s}\\
    &= \smallint\limits_{0}^\infty \smallint_{\widehat{G}}\Vert  s^{-a}[\pi(\mathcal{R})]^{-\frac{a}{\nu}}  [(1-\Phi)(\tau  s (\pi(\mathcal{R} ))^{\frac{1}{\nu}})]\widehat{f}(\pi)\Vert^2_{\textnormal{HS}} d\pi \frac{ds}{s}.
\end{align*} Now, we are going to use the functional calculus of Rockland operators. Let us proceed as follows, 
\begin{align*}
   & \smallint\limits_{0}^\infty \smallint_{\widehat{G}}\Vert  s^{-a}[\pi(\mathcal{R})]^{-\frac{a}{\nu}}  [(1-\Phi)(\tau  s (\pi(\mathcal{R} ))^{\frac{1}{\nu}})]\widehat{f}(\pi)\Vert^2_{\textnormal{HS}} d\pi \frac{ds}{s}\\
    &= \smallint\limits_{0}^\infty \smallint_{\widehat{G}}\Vert  s^{-a} \smallint_{0}^{\infty}\lambda^{-a}  [(1-\Phi)(\tau  s \lambda)]dE_{\pi(\mathcal{R})^{\frac{1}{\nu}}}\widehat{f}(\pi)\Vert^2_{\textnormal{HS}} d\pi \frac{ds}{s}\\
    &= \smallint\limits_{0}^\infty \smallint_{\widehat{G}}\Vert  s^{-a} \smallint_{0}^{\infty}\lambda^{-a}  [(1-\Phi)(\tau  s \lambda)]dE_{\pi(\mathcal{R})^{\frac{1}{\nu}}}(\lambda)\widehat{f}(\pi)\Vert^2_{\textnormal{HS}} d\pi \frac{ds}{s}.
\end{align*}
Denoting 
\begin{equation}
    A(s,\pi):= s^{-a} \smallint_{0}^{\infty}\lambda^{-a}  [(1-\Phi)(\tau  s \lambda)]dE_{\pi(\mathcal{R})^{\frac{1}{\nu}}}(\lambda)\widehat{f}(\pi)
\end{equation}we have that
\begin{equation}
    A(s,\pi)^*= \widehat{f}(\pi)^{*} s^{-a} \smallint_{0}^{\infty}\mu^{-a}  [(1-\Phi)(\tau  s \mu)]dE_{\pi(\mathcal{R})^{\frac{1}{\nu}}}(\mu).
\end{equation}Consequently,

\begin{align*}
    &\smallint\limits_{0}^\infty \smallint_{\widehat{G}}\Vert  s^{-a} \smallint_{0}^{\infty}\lambda^{-a}  [(1-\Phi)(\tau  s \lambda)]dE_{\pi(\mathcal{R})^{\frac{1}{\nu}}}(\lambda)\widehat{f}(\pi)\Vert^2_{\textnormal{HS}} d\pi \frac{ds}{s}\\
    &= \smallint\limits_{0}^\infty \smallint_{\widehat{G}}\Vert  A(s,\pi)\Vert^2_{\textnormal{HS}} d\pi \frac{ds}{s}\\
    &= \smallint\limits_{0}^\infty \smallint_{\widehat{G}}\textnormal{Tr}[  A(s,\pi)^*A(s,\pi)] d\pi \frac{ds}{s}\\
    &=\smallint\limits_{0}^\infty \smallint_{\widehat{G}}\textnormal{Tr}[\widehat{f}(\pi)^{*} s^{-a} \smallint_{0}^{\infty}\mu^{-a}  [(1-\Phi)(\tau  s \mu)]dE_{\pi(\mathcal{R})^{\frac{1}{\nu}}}(\mu)\\
    &\hspace{4cm} s^{-a} \smallint_{0}^{\infty}\lambda^{-a}  [(1-\Phi)(\tau  s \lambda)]dE_{\pi(\mathcal{R})^{\frac{1}{\nu}}}(\lambda)\widehat{f}(\pi)] d\pi \frac{ds}{s}\\
     &=\smallint\limits_{0}^\infty \smallint_{\widehat{G}}\textnormal{Tr}[\widehat{f}(\pi)\widehat{f}(\pi)^{*} s^{-a} \smallint_{0}^{\infty}\mu^{-a}  [(1-\Phi)(\tau  s \mu)]dE_{\pi(\mathcal{R})^{\frac{1}{\nu}}}(\mu)\\
    &\hspace{4cm} s^{-a} \smallint_{0}^{\infty}\lambda^{-a}  [(1-\Phi)(\tau  s \lambda)]dE_{\pi(\mathcal{R})^{\frac{1}{\nu}}}(\lambda)] d\pi \frac{ds}{s}\\
     &=\smallint\limits_{0}^\infty \smallint_{\widehat{G}}\textnormal{Tr}[\widehat{f}(\pi)\widehat{f}(\pi)^{*} s^{-2a} \smallint_{0}^{\infty}\lambda^{-2a}  [(1-\Phi)(\tau  s \lambda)]^2dE_{\pi(\mathcal{R})^{\frac{1}{\nu}}}(\lambda)] d\pi \frac{ds}{s}\\
     &= \smallint_{\widehat{G}}\textnormal{Tr}[\widehat{f}(\pi)\widehat{f}(\pi)^{*} \smallint\limits_{0}^\infty s^{-2a} \smallint_{0}^{\infty}\lambda^{-2a}  [(1-\Phi)(\tau  s \lambda)]^2dE_{\pi(\mathcal{R})^{\frac{1}{\nu}}}(\lambda)  \frac{ds}{s}  ] d\pi\\
      &= \smallint_{\widehat{G}}\textnormal{Tr}[\widehat{f}(\pi)\widehat{f}(\pi)^{*} \smallint\limits_{0}^\infty s^{-2a} \smallint_{\lambda\geq  \tau^{-1}s^{-1}}^{\infty}\lambda^{-2a}  [(1-\Phi)(\tau  s \lambda)]^2dE_{\pi(\mathcal{R})^{\frac{1}{\nu}}}(\lambda)  \frac{ds}{s}  ] d\pi,
\end{align*}where in the last integral we have used that $(1-\Phi)(\tau  s \lambda)=1-\Phi(\tau  s \lambda)=0,$ when $\lambda\leq  \tau^{-1}s^{-1}.$ Using Fubini's theorem we have that
\begin{align*}
    &\smallint_{\widehat{G}}\textnormal{Tr}[\widehat{f}(\pi)\widehat{f}(\pi)^{*} \smallint\limits_{0}^\infty s^{-2a} \smallint_{\lambda\geq  \tau^{-1}s^{-1}}^{\infty}\lambda^{-2a}  [(1-\Phi)(\tau  s \lambda)]^2dE_{\pi(\mathcal{R})^{\frac{1}{\nu}}}(\lambda)  \frac{ds}{s}  ] d\pi\\
    &=\smallint_{\widehat{G}}\textnormal{Tr}[\widehat{f}(\pi)\widehat{f}(\pi)^{*} \smallint\limits_{0}^\infty  \smallint_{\lambda\geq  \tau^{-1}s^{-1}}^{\infty} s^{-2a}\lambda^{-2a}  [(1-\Phi)(\tau  s \lambda)]^2dE_{\pi(\mathcal{R})^{\frac{1}{\nu}}}(\lambda)  \frac{ds}{s}  ] d\pi\\
    &=\smallint_{\widehat{G}}\textnormal{Tr}[\widehat{f}(\pi)\widehat{f}(\pi)^{*} \smallint\limits_{0}^\infty \smallint_{s\geq  \tau^{-1}\lambda^{-1}}^{\infty} s^{-2a} \lambda^{-2a}  [(1-\Phi)(\tau  s \lambda)]^2  \frac{ds}{s}  dE_{\pi(\mathcal{R})^{\frac{1}{\nu}}}(\lambda)] d\pi\\
    &\leq \smallint_{\widehat{G}}\textnormal{Tr}[\widehat{f}(\pi)\widehat{f}(\pi)^{*}]\left\Vert \smallint\limits_{0}^\infty \smallint_{s\geq  \tau^{-1}\lambda^{-1}}^{\infty} s^{-2a} \lambda^{-2a}  [(1-\Phi)(\tau  s \lambda)]^2  \frac{ds}{s} dE_{\pi(\mathcal{R})^{\frac{1}{\nu}}}(\lambda)  \right\Vert_{\textnormal{op}} d\pi.
\end{align*} Note that in the last inequality we have used \eqref{Aux:1} and \eqref{Aux:2}. Now, in order to estimate the operator norm inside the integral, let us apply \eqref{op:inequality} and  let us make the following substantial reduction
\begin{align*}
 & \left\Vert \smallint\limits_{0}^\infty \smallint_{s\geq  \tau^{-1}\lambda^{-1}}^{\infty} s^{-2a} \lambda^{-2a}  [(1-\Phi)(\tau  s \lambda)]^2 \frac{ds}{s}  dE_{\pi(\mathcal{R})^{\frac{1}{\nu}}}(\lambda) \right\Vert_{\textnormal{op}}\\
 &\leq \sup_{\lambda> 0} \smallint_{s\geq  \tau^{-1}\lambda^{-1}}^{\infty} s^{-2a} \lambda^{-2a}  [(1-\Phi)(\tau  s \lambda)]^2 \frac{ds}{s}\\
  &\leq \sup_{\lambda> 0} \smallint_{s\gg  \tau^{-1}\lambda^{-1}}^{\infty} s^{-2a} \lambda^{-2a}   \frac{ds}{s}= \sup_{\lambda> 0} \lambda^{-2a}  \smallint_{s\gg  \tau^{-1}\lambda^{-1}}^{\infty} s^{-2a-1}   ds\\
  &\sim  \sup_{\lambda> 0} \lambda^{-2a} ( -s^{-2a}/2a)|_{s\sim  \tau^{-1}\lambda^{-1} }^{\infty}\\
  &\sim  \sup_{\lambda> 0} \lambda^{-2a} \tau^{2a}\lambda^{2a}=\tau^{2a}.
\end{align*} Note that for the convergence of the previous integral we have used that $a>0.$ 
The analysis above implies that 
\begin{align*}
    I_1^2\lesssim \tau^{2a} \smallint_{\widehat{G}}\textnormal{Tr}[\widehat{f}(\pi)\widehat{f}(\pi)^{*}]d\pi=  \tau^{2a} \Vert f\Vert_{L^2(G)}^2.
\end{align*}
So, we have proved the estimate $I_1^2\lesssim \tau^{2a}$ which implies that
\begin{equation}
    I_1\lesssim \tau^a \Vert f\Vert_{L^2(G)}.
\end{equation} The proof of Lemma \ref{Lemma:1} is complete.
\end{proof}

\begin{proof}[Proof of Lemma \ref{Lemma:2}]
As for the estimate of the $L^2$-norm  of 
$$
    G_2f(x)= \left( \smallint\limits_{0}^\infty |sF'(x,s)|^2\frac{ds}{s}\right)^{\frac{1}{2}},
$$ let us apply Plancherel theorem as follows
\begin{align*}
    I_2^2:= &\smallint_G G_2f(x)^2dx=\smallint_G \smallint\limits_{0}^\infty |sF'(x,s)|^2\frac{ds}{s}dx= \smallint\limits_{0}^\infty \smallint_G|sF'(x,s)|^2 dx\frac{ds}{s}\\
    &=  \smallint\limits_{0}^\infty \smallint_{\widehat{G}}\Vert s \widehat{F}'(\pi,s)\Vert^2_{\textnormal{HS}} d\pi \frac{ds}{s},
\end{align*}where $\widehat{F}'(\pi,s)$ denotes the derivative with respect to $s$ of $s\mapsto F(\pi,s).$  Since
$$ \widehat{F}(\pi,s)= \widehat{d\sigma}(s\cdot \pi)(I_{H_\pi}-\widehat{\Phi}_\tau(s\cdot \pi))\widehat{f}(\pi), $$ we have that
\begin{align*}
    \widehat{F}'(\pi,s)=[\frac{d}{ds}\{\widehat{d\sigma}(s\cdot \pi)\}(I_{H_\pi}-\widehat{\Phi}_\tau(s\cdot \pi))+\widehat{d\sigma}(s\cdot \pi)\frac{d}{ds}\{(I_{H_\pi}-\widehat{\Phi}_\tau(s\cdot \pi))\}]\widehat{f}(\pi).
\end{align*}Now, observe that
\begin{align*}
    I_{2}&=\left(\smallint\limits_{0}^\infty \smallint_{\widehat{G}}\Vert s \widehat{F}'(\pi,s)\Vert^2_{\textnormal{HS}} d\pi \frac{ds}{s}\right)^{\frac{1}{2}}\\
    &\leq  \left(\smallint\limits_{0}^\infty \smallint_{\widehat{G}}\Vert s[\frac{d}{ds}\{\widehat{d\sigma}(s\cdot \pi)\}(I_{H_\pi}-\widehat{\Phi}_\tau(s\cdot \pi))]\widehat{f}(\pi)\Vert^2_{\textnormal{HS}} d\pi \frac{ds}{s}\right)^{\frac{1}{2}}\\
    &+  \left(\smallint\limits_{0}^\infty \smallint_{\widehat{G}}\Vert s[\widehat{d\sigma}(s\cdot \pi)\frac{d}{ds}\{(I_{H_\pi}-\widehat{\Phi}_\tau(s\cdot \pi))\}]\widehat{f}(\pi)\Vert^2_{\textnormal{HS}} d\pi \frac{ds}{s}\right)^{\frac{1}{2}}\\
    &:=I_{2,1}+I_{2,2}.
\end{align*} To estimate 
\begin{equation}
    I_{2,1}:=  \left(\smallint\limits_{0}^\infty \smallint_{\widehat{G}}\Vert s[\frac{d}{ds}\{\widehat{d\sigma}(s\cdot \pi)\}(I_{H_\pi}-\widehat{\Phi}_\tau(s\cdot \pi))]\widehat{f}(\pi)\Vert^2_{\textnormal{HS}} d\pi \frac{ds}{s}\right)^{\frac{1}{2}},
\end{equation} consider $\varepsilon_0>0$ such that the derivative  $$\frac{d}{ds}\{\widehat{d\sigma}(s\cdot \pi)\}(I_{H_\pi}-\widehat{\Phi}_\tau(s\cdot \pi))$$ satisfies the Fourier transform condition
\begin{equation}
\Sigma_{\varepsilon_0}:= \sup_{s>0;\pi\in \widehat{G}}   \Vert  s\frac{d}{ds}\{\widehat{d\sigma}(s\cdot \pi)\}[(s\cdot \pi)(\mathcal{R})]^{\frac{a-\varepsilon_0}{\nu}}\Vert_{\textnormal{op}}<\infty.
\end{equation} Then, we can use the argument above in the estimate of $I_1,$ in order to estimate the term $I_{2,1}.$ We repeat the argument since our interpolation technique will be sensible to a suitable condition involving the parameter $\varepsilon_0.$ For now, we assume that $a> \varepsilon_0;$ this condition  will be clarified later. Indeed, observe that 
\begin{align*}
     I_{2,1}^2 &\leq  \Sigma^{2}_{\varepsilon_0} \smallint\limits_{0}^\infty \smallint_{\widehat{G}}\Vert   [(s\cdot \pi)(\mathcal{R})]^{-\frac{a-\varepsilon_0}{\nu}} \mathscr{F}[ (\delta-\Phi_\tau)](s\cdot \pi)\widehat{f}(\pi)\Vert^2_{\textnormal{HS}} d\pi \frac{ds}{s}\\
     &=\Sigma_{\varepsilon_0}^2  \smallint\limits_{0}^\infty \smallint_{\widehat{G}}\Vert   [(s\cdot \pi)(\mathcal{R})]^{-\frac{a-\varepsilon_0}{\nu}}  [(1-\Phi)((\tau  s\cdot \pi)(\mathcal{R}^{\frac{1}{\nu}} ))]\widehat{f}(\pi)\Vert^2_{\textnormal{HS}} d\pi \frac{ds}{s}\\
     &=\Sigma_{\varepsilon_0}^2\smallint\limits_{0}^\infty \smallint_{\widehat{G}}\Vert  s^{-(a-\varepsilon_0)}[\pi(\mathcal{R})]^{-\frac{a-\varepsilon_0}{\nu}}  [(1-\Phi)(\tau  s (\pi(\mathcal{R} ))^{\frac{1}{\nu}})]\widehat{f}(\pi)\Vert^2_{\textnormal{HS}} d\pi \frac{ds}{s}.
\end{align*}Denoting 
\begin{equation}
    \tilde A(s,\pi)= s^{-(a-\varepsilon_0)} \smallint_{0}^{\infty}\lambda^{-(a-\varepsilon_0)}  [(1-\Phi)(\tau  s \lambda)]dE_{\pi(\mathcal{R})^{\frac{1}{\nu}}}(\lambda)\widehat{f}(\pi)
\end{equation}we have that
\begin{equation}
    \tilde A(s,\pi)^*= \widehat{f}(\pi)^{*} s^{-(a-\varepsilon_0)} \smallint_{0}^{\infty}\mu^{-(a-\varepsilon_0)}  [(1-\Phi)(\tau  s \mu)]dE_{\pi(\mathcal{R})^{\frac{1}{\nu}}}(\mu).
\end{equation}Consequently,
\begin{align*}
    &\smallint\limits_{0}^\infty \smallint_{\widehat{G}}\Vert  s^{-(a-\varepsilon_0)}[\pi(\mathcal{R})]^{-\frac{a-\varepsilon_0}{\nu}}  [(1-\Phi)(\tau  s (\pi(\mathcal{R} ))^{\frac{1}{\nu}})]\widehat{f}(\pi)\Vert^2_{\textnormal{HS}} d\pi \frac{ds}{s}\\
   &= \smallint\limits_{0}^\infty \smallint_{\widehat{G}}\Vert  \tilde A(s,\pi)\Vert^2_{\textnormal{HS}} d\pi \frac{ds}{s}\\
    &= \smallint\limits_{0}^\infty \smallint_{\widehat{G}}\textnormal{Tr}[  \tilde A(s,\pi)^*\tilde A(s,\pi)] d\pi \frac{ds}{s}\\
    &=\smallint_{\widehat{G}}\textnormal{Tr}[\widehat{f}(\pi)\widehat{f}(\pi)^{*} \smallint\limits_{0}^\infty \smallint_{s\geq  \tau^{-1}\lambda^{-1}}^{\infty} s^{-2(a-\varepsilon_0)} \lambda^{-2(a-\varepsilon_0)}  [(1-\Phi)(\tau  s \lambda)]^2  \frac{ds}{s}  dE_{\pi(\mathcal{R})^{\frac{1}{\nu}}}(\lambda)] d\pi\\
    &\leq \smallint_{\widehat{G}}\textnormal{Tr}[\widehat{f}(\pi)\widehat{f}(\pi)^{*}]\left\Vert \smallint\limits_{0}^\infty \smallint_{s\geq  \tau^{-1}\lambda^{-1}}^{\infty} s^{-2(a-\varepsilon_0)} \lambda^{-2(a-\varepsilon_0)}  [(1-\Phi)(\tau  s \lambda)]^2  \frac{ds}{s} dE_{\pi(\mathcal{R})^{\frac{1}{\nu}}}(\lambda)  \right\Vert_{\textnormal{op}} d\pi.
\end{align*} Now, to estimate the operator norm inside the integral, observe that in view of \eqref{op:inequality} we have that
\begin{align*}
 & \left\Vert \smallint\limits_{0}^\infty \smallint_{s\geq  \tau^{-1}\lambda^{-1}}^{\infty} s^{-2(a-\varepsilon_0)} \lambda^{-2(a-\varepsilon_0)}  [(1-\Phi)(\tau  s \lambda)]^2 \frac{ds}{s}  dE_{\pi(\mathcal{R})^{\frac{1}{\nu}}}(\lambda) \right\Vert_{\textnormal{op}}\\
 &\leq \sup_{\lambda> 0} \smallint_{s\geq  \tau^{-1}\lambda^{-1}}^{\infty} s^{-2(a-\varepsilon_0)} \lambda^{-2(a-\varepsilon_0)}  [(1-\Phi)(\tau  s \lambda)]^2 \frac{ds}{s}\\
  &\leq \sup_{\lambda> 0} \smallint_{s\gg  \tau^{-1}\lambda^{-1}}^{\infty} s^{-2(a-\varepsilon_0)} \lambda^{-2(a-\varepsilon_0)}   \frac{ds}{s}= \sup_{\lambda> 0} \lambda^{-2(a-\varepsilon_0)}  \smallint_{s\gg  \tau^{-1}\lambda^{-1}}^{\infty} s^{-2(a-\varepsilon_0)-1}   ds\\
  &\sim  \sup_{\lambda> 0} \lambda^{-2a} ( -s^{-2(a-\varepsilon_0)}/2(a-\varepsilon_0))|_{s\sim  \tau^{-1}\lambda^{-1} }^{\infty}\\
  &\sim  \sup_{\lambda> 0} \lambda^{-2(a-\varepsilon_0)} \tau^{2(a-\varepsilon_0)}\lambda^{2(a-\varepsilon_0)}=\tau^{2(a-\varepsilon_0)},
\end{align*}where we have used the condition $a>\varepsilon_0$ in the convergence of the integral above.  All the analyisis above implies that 
\begin{align*}
    I_{2,1}\lesssim \tau^{a-\varepsilon_0} \Vert f\Vert_{L^2(G)}.
\end{align*}Indeed, note that 
\begin{align*}
     I_{2,1}^2\lesssim  \tau^{2(a-\varepsilon_0)} \smallint_{\widehat{G}}\textnormal{Tr}[\widehat{f}(\pi)\widehat{f}(\pi)^{*}] d\pi=  \tau^{2(a-\varepsilon_0)} \Vert f\Vert_{L^2(G)}^2.
\end{align*}
We claim that the symbol
$$ \widehat{d\sigma}(s\cdot \pi)\frac{d}{ds}\{(I_{H_\pi}-\widehat{\Phi}_\tau(s\cdot \pi))\}]\widehat{f}(\pi)$$
is ``smoothing" and consequently the contribution of the integral
\begin{align*}
    I_{2,2}=  \left(\smallint\limits_{0}^\infty \smallint_{\widehat{G}}\Vert s[\widehat{d\sigma}(s\cdot \pi)\frac{d}{ds}\{(I_{H_\pi}-\widehat{\Phi}_\tau(s\cdot \pi))\}]\widehat{f}(\pi)\Vert^2_{\textnormal{HS}} d\pi \frac{ds}{s}\right)^{\frac{1}{2}}   
\end{align*} is less significant than the one by $I_{2,2}.$ Indeed,  we are going to prove that 
\begin{equation}\label{error:term}
    I_{2,2}\lesssim I_{2,1}.
\end{equation}  In order to estimate 
\begin{align*}
    I_{2,2}=  \left(\smallint\limits_{0}^\infty \smallint_{\widehat{G}}\Vert s[\widehat{d\sigma}(s\cdot \pi)\frac{d}{ds}\{(I_{H_\pi}-\widehat{\Phi}_\tau(s\cdot \pi))\}]\widehat{f}(\pi)\Vert^2_{\textnormal{HS}} d\pi \frac{ds}{s}\right)^{\frac{1}{2}},
\end{align*} let us explicitly compute the term
\begin{align*}
    \frac{d}{ds}\{(I_{H_\pi}-\widehat{\Phi}_\tau(s\cdot \pi))\} &= \frac{d}{ds}\{(I_{H_\pi}-{\Phi}(\tau s\cdot \pi(\mathcal{R})^{\frac{1}{\nu}}))\}.
\end{align*}Indeed, observe that 
\begin{align*}
     \frac{d}{ds}\{(I_{H_\pi}-\widehat{\Phi}_\tau(s\cdot \pi))\} &=  \frac{d}{ds}\smallint\limits_0^\infty(1-\phi(s\tau\lambda))dE_{\pi(\mathcal{R})^{\frac{1}{\nu}}}(\lambda)\\
     &=\smallint\limits_0^\infty-\phi'(s\tau\lambda)\tau\lambda dE_{\pi(\mathcal{R})^{\frac{1}{\nu}}}(\lambda)\\
     &=-\phi'(s\tau\cdot \pi(\mathcal{R})^{\frac{1}{\nu}})\pi(\mathcal{R})^{\frac{1}{\nu}}\tau.
\end{align*} Consequently,
\begin{align*}
    I_{2,2}^2&= \smallint\limits_{0}^\infty \smallint_{\widehat{G}}\Vert s[\widehat{d\sigma}(s\cdot \pi)\frac{d}{ds}\{(I_{H_\pi}-\widehat{\Phi}_\tau(s\cdot \pi))\}]\widehat{f}(\pi)\Vert^2_{\textnormal{HS}} d\pi \frac{ds}{s}\\
    &= \smallint\limits_{0}^\infty \smallint_{\widehat{G}}\Vert s[\widehat{d\sigma}(s\cdot \pi) \phi'(s\tau\cdot \pi(\mathcal{R})^{\frac{1}{\nu}})\pi(\mathcal{R})^{\frac{1}{\nu}}\tau ]\widehat{f}(\pi)\Vert^2_{\textnormal{HS}} d\pi \frac{ds}{s}\\
     &= \smallint\limits_{0}^\infty \smallint_{\widehat{G}}\Vert [\widehat{d\sigma}(s\cdot \pi)[(s\cdot \pi)(\mathcal{R})]^{\frac{a}{\nu}} [(s\cdot \pi)(\mathcal{R})]^{-\frac{a}{\nu}}\\
     &\hspace{2cm}\phi'(s\tau\cdot \pi(\mathcal{R})^{\frac{1}{\nu}})\pi(\mathcal{R})^{\frac{1}{\nu}}s\tau ]\widehat{f}(\pi)\Vert^2_{\textnormal{HS}} d\pi \frac{ds}{s}\\
     &\leq \Sigma^2 \smallint\limits_{0}^\infty \smallint_{\widehat{G}}\Vert [ [(s\cdot \pi)(\mathcal{R})]^{-\frac{a}{\nu}}\phi'(s\tau\cdot \pi(\mathcal{R})^{\frac{1}{\nu}})\pi(\mathcal{R})^{\frac{1}{\nu}}s\tau ]\widehat{f}(\pi)\Vert^2_{\textnormal{HS}} d\pi \frac{ds}{s}.
\end{align*}
By denoting 
\begin{equation}
B(s,\pi)= [ [(s\cdot \pi)(\mathcal{R})]^{-\frac{a}{\nu}}\phi'(s\tau\cdot \pi(\mathcal{R})^{\frac{1}{\nu}})\pi(\mathcal{R})^{\frac{1}{\nu}}s\tau ]\widehat{f}(\pi),
\end{equation} we have that 
\begin{equation}
    B(s,\pi)^{*}= \widehat{f}(\pi)^*[ [(s\cdot \pi)(\mathcal{R})]^{-\frac{a}{\nu}}\phi'(s\tau\cdot \pi(\mathcal{R})^{\frac{1}{\nu}})\pi(\mathcal{R})^{\frac{1}{\nu}}s\tau ].
\end{equation} Consequently 
\begin{align*}
    & \smallint\limits_{0}^\infty \smallint_{\widehat{G}}\Vert [ [(s\cdot \pi)(\mathcal{R})]^{-\frac{a}{\nu}}\phi'(s\tau\cdot \pi(\mathcal{R})^{\frac{1}{\nu}})\pi(\mathcal{R})^{\frac{1}{\nu}}s\tau ]\widehat{f}(\pi)\Vert^2_{\textnormal{HS}} d\pi \frac{ds}{s}\\
    &= \smallint\limits_{0}^\infty \smallint_{\widehat{G}}\textnormal{Tr}[ 
        B(s,\pi)^{*}B(s,\pi)  ] d\pi \frac{ds}{s}\\
    &= \smallint\limits_{0}^\infty \smallint_{\widehat{G}}\textnormal{Tr}[   \widehat{f}(\pi) \widehat{f}(\pi)^*
        [ [(s\cdot \pi)(\mathcal{R})]^{-\frac{2a}{\nu}}\phi'(s\tau\cdot \pi(\mathcal{R})^{\frac{1}{\nu}})^2\pi(\mathcal{R})^{\frac{2}{\nu}}s\tau ] ] d\pi \frac{ds}{s}    \\
    &=  \smallint_{\widehat{G}}\textnormal{Tr}[   \widehat{f}(\pi) \widehat{f}(\pi)^*
       \smallint\limits_{0}^\infty [ [(s\cdot \pi)(\mathcal{R})]^{-\frac{2a}{\nu}}\phi'(s\tau\cdot \pi(\mathcal{R})^{\frac{1}{\nu}})^{2}\pi(\mathcal{R})^{\frac{2}{\nu}}s^2\tau^2 ]\frac{ds}{s} ] d\pi   \\
    &\leq \smallint_{\widehat{G}}\textnormal{Tr}[   \widehat{f}(\pi) \widehat{f}(\pi)^*
        ]\left\Vert \smallint\limits_{0}^\infty [ [(s\cdot \pi)(\mathcal{R})]^{-\frac{2a}{\nu}}\phi'(s\tau\cdot \pi(\mathcal{R})^{\frac{1}{\nu}})^{2}\pi(\mathcal{R})^{\frac{2}{\nu}}s^2\tau^2 ]\frac{ds}{s}\right\Vert_{\textnormal{op}} d\pi.    
\end{align*}In order to estimate the operator norm inside the integral, note that the support of $\phi'$ is contained in the interval $[1,2].$ So using the property in \eqref{op:inequality} we proceed as follows,
\begin{align*}
   & \left\Vert \smallint\limits_{0}^\infty [ [(s\cdot \pi)(\mathcal{R})]^{-\frac{2a}{\nu}}\phi'(s\tau\cdot \pi(\mathcal{R})^{\frac{1}{\nu}})^{2}\pi(\mathcal{R})^{\frac{2}{\nu}}s^2\tau^2 ]\frac{ds}{s}\right\Vert_{\textnormal{op}}\\
    &= \left\Vert \smallint\limits_{0}^\infty [ [(s\cdot \pi)(\mathcal{R})]^{-\frac{2a}{\nu}}\phi'(s\tau\cdot \pi(\mathcal{R})^{\frac{1}{\nu}})^{2}\pi(\mathcal{R})^{\frac{2}{\nu}}s^2\tau^2 ]\frac{ds}{s}\right\Vert_{\textnormal{op}}\\
    &= \left\Vert \smallint\limits_{0}^\infty \smallint\limits_0^\infty s^{-2a}\lambda^{-2a}\phi'(s\tau\lambda)^2\lambda^2s^2\tau^2 dE_{\pi(\mathcal{R})^{\frac{1}{\nu}}} (\lambda)\frac{ds}{s}\right\Vert_{\textnormal{op}}\\
    &= \left\Vert \smallint\limits_{0}^\infty \smallint\limits_0^\infty s^{-2a}\lambda^{-2a}\phi'(s\tau\lambda)^2\lambda^2s^2\tau^2  \frac{ds}{s} dE_{\pi(\mathcal{R})^{\frac{1}{\nu}}}(\lambda) \right\Vert_{\textnormal{op}}\\
    &\leq \sup_{\lambda>0}|\smallint\limits_0^\infty s^{-2a}\lambda^{-2a}\phi'(s\tau\lambda)^2\lambda^2s^2\tau^2  \frac{ds}{s}|\\
    &\leq \sup_{\lambda>0}|\smallint\limits_{s\sim \lambda^{-1}\tau^{-1}} s^{-2a}\lambda^{-2a}\phi'(s\tau\lambda)^2\lambda^2s^2\tau^2  \frac{ds}{s}|\\
    &\leq \sup_{\lambda>0}\smallint\limits_{s\sim \lambda^{-1}\tau^{-1}} s^{-2a}\lambda^{-2a}\lambda^2s\tau^2  ds\\
    &\asymp \sup_{\lambda>0} \smallint\limits_{s\sim \lambda^{-1}\tau^{-1}} \lambda^{2a}\tau^{2a}\lambda^{-2a}\lambda^2\lambda^{-1}\tau^{-1}\tau^{2}ds\\
    &= \sup_{\lambda>0}\lambda\tau^{2a+1} \smallint\limits_{s\sim \lambda^{-1}\tau^{-1}}ds\\
    &\asymp \sup_{\lambda>0} \lambda\tau^{2a+1}\lambda^{-1}\tau^{-1}\\
    &=\tau^{2a}.
\end{align*}
So, we have proved that  $I_{2,2}^2\lesssim \tau^{2a}\Vert f\Vert_{L^2(G)}^2,$ from where we have the estimate $I_{2,2}\lesssim \tau^{a}\Vert f\Vert_{L^2(G)}.$ Since $\tau\ll 1,$ we have that $\tau^{a}\lesssim \tau^{a-\varepsilon_0}$ and then $I_{2,2}\lesssim I_{2,1}$ as claimed. Here, finally, we have used that $\varepsilon_0\geq 0.$
\end{proof}

\subsection{The $L^1$-theory for $M_1$}
Let us prove the weak (1,1) type of $M_1.$ This part of the proof will be important in identifying the topological condition in \eqref{topological:condition} that the measure $d\sigma$ must satisfy. To do so, we take inspiration from the original proof of the $ L^p$ result by Stein, when making the corresponding analysis in Littlewood-Paley components. So, the arguments about the weak (1,1) type are classical and we closely are going to follow the accurate approach in Grafakos \cite[Pages 479-480]{Grafakos}, adapted to the nilpotent setting.

First, let us prove the pointwise estimate
\begin{equation}\label{pointwise:HLMF}
    M_1f(x)\lesssim \tau^{Q_0-Q}\mathscr{M}f(x),
\end{equation}
where $\mathscr{M}$ denotes the standard Hardy-Littlewood maximal function (see Folland and Stein \cite[Chapter 2]{FollandStein1982}),
\begin{equation}
     \mathscr{M}f(x)=\sup_{r>0}|B(x,r)|^{-1}\smallint_{B(x,r)}|f(y)|dy.
\end{equation} Using  \eqref{pointwise:HLMF} we shall prove that $M_1$ satisfies the weak (1,1) inequality
\begin{equation}\label{L1:estimate:m1}
 \boxed{   |\{x\in G: M_1f(x)>\frac{\lambda}{2}\}|\lesssim \frac{\tau^{Q_0-Q}}{\lambda}\Vert f\Vert_{L^1(G)}}
\end{equation}  For the proof of \eqref{pointwise:HLMF} it suffices to prove the inequality
\begin{equation}
    |\Phi_\tau*d\sigma(x)|\lesssim C_M\tau^{Q_0-Q}(1+|x|)^{-M},
\end{equation}for any $M>Q.$
Note that $\phi\in \mathscr{S}(G)$ is a function in the Schwartz class, and we have the estimate  $|\Phi(z)|\leq C_M(1+|z|)^{-M}.$ Consequently,
\begin{align*}
     |\Phi_\tau*d\sigma(x)| &=\smallint_{G}|\Phi_\tau(xy^{-1})|d\sigma(y)=\tau^{-Q}\smallint_{G}|\Phi(\tau^{-1}\cdot (xy^{-1}))|d\sigma(y)\\
     &\lesssim_M \tau^{-Q}\smallint_{G}(1+|\tau^{-1}(xy^{-1})|)^{-M}d\sigma(y)\\
     &=\tau^{-Q}\smallint_{G}(1+\tau^{-1}|(xy^{-1})|)^{-M}d\sigma(y).
\end{align*}In particular for any $N>M>Q,$ we also have the estimate
\begin{equation}
     |\Phi_\tau*d\sigma(x)| \leq C_N \tau^{-Q}\smallint_{G}(1+\tau^{-1}|(xy^{-1})|)^{-N}d\sigma(y).
\end{equation} Let $K$ be the support of $d\sigma$ and let us consider the regions
\begin{equation}
    S_{-1}(x)=\{y\in K:\tau^{-1}|xy^{-1}|\leq 1 \},
\end{equation}and for any $r\in \mathbb{N}_0,$ let 
\begin{equation}
    S_{r}(x)= \{y\in K:2^{r}<\tau^{-1}|xy^{-1}|< 2^{r+1} \}=: \mathbb{S}_{2^r\tau}(x).
\end{equation}By hypothesis we have that
\begin{equation}
 \sigma( S_{r}(x))\leq (2^r\tau)^{Q_0}.
\end{equation}
Note also that when $y\in S_{r}(x)\subset K\subset B(e,R),$ where $R>0$ is larger  enough,   one has the estimate $|xy^{-1}|\leq 2^r\tau,$ and then from the inequality $|x|-|y|\leq |xy^{-1}|,$ we deduce that $|x|\leq 2^r\tau+R. $ 
Let $j_0\in \mathbb{Z}$ be such that $2^{-j_0}\leq \tau<2^{-j_0+1}.$ From the observations above we have that
\begin{align*}
    &\smallint_{G}(1+\tau^{-1}|(xy^{-1})|)^{-N}d\sigma(y)\\
    &= \sum_{r=-1}^{\infty}\smallint_{S_{r}(x)} (1+\tau^{-1}|(xy^{-1})|)^{-N}d\sigma(y)\\
    &=\sum_{r=-1}^{j_0}\smallint_{S_{r}(x)} (1+\tau^{-1}|(xy^{-1})|)^{-N}d\sigma(y)+\sum_{r=j_0+1}^{\infty}\smallint_{S_{r}(x)} (1+\tau^{-1}|(xy^{-1})|)^{-N}d\sigma(y)\\
     &\sim  \sum_{r=-1}^{j_0}\smallint_{S_{r}(x)} (1+2^{r})^{-N}\chi_{B(e,2^{r}\tau+R)}(x)d\sigma(y)\\
     &\hspace{2cm}+\sum_{r=j_0+1}^{\infty}\smallint_{S_{r}(x)} (1+2^{r})^{-N}\chi_{B(e,2^{r}\tau+R)}(x)d\sigma(y)\\
     & \sim  \sum_{r=-1}^{j_0}\smallint_{S_{r}(x)} (1+2^{r})^{-N}\chi_{B(e,R+2)}(x)d\sigma(y)+\\
     &\hspace{2cm}\sum_{r=j_0+1}^{\infty}\smallint_{S_{r}(x)} (1+2^{r})^{-N}\chi_{B(e,2^{r}\tau+R)}(x)d\sigma(y)\\
     & \lesssim  \sum_{r=-1}^{j_0} 2^{-Nr}\chi_{B(e,R+2)}(x)\sigma(S_r(x))\\
     &\hspace{2cm}+\sum_{r=j_0+1}^{\infty} 2^{-rN}\chi_{B(e,2^{r}\tau+R)}(x)\sigma(S_r(x))\\
     & \leq   \sum_{r=-1}^{j_0} 2^{-Nr}\chi_{B(e,R+2)}(x)\sigma(S_r(x))\\
     &\hspace{2cm}+\sum_{r=j_0+1}^{\infty} 2^{-rN}\chi_{B(e,2^{r}\tau+R)}(x)\sigma(K)\\
     & \lesssim   \sum_{r=-1}^{j_0} 2^{-Nr}\chi_{B(e,R+2)}(x) (2^r\tau)^{Q_0}+\sum_{r=j_0+1}^{\infty} 2^{-rN}\chi_{B(e,2^{r}\tau+R)}(x)\sigma(K).
\end{align*}Note that, allowing $N>M>Q_0,$ we can estimate
\begin{align*}
    \tau^{-Q} \sum_{r=-1}^{j_0} 2^{-Nr}(2^r\tau)^{Q_0}&=\tau^{Q_0-Q} \sum_{r=-1}^{j_0} 2^{-(N-Q_0)r} 
     \lesssim \tau^{Q_0-Q} .
\end{align*}On the other hand, for $x\in B(e,2^{r}\tau+R),$ one has the estimate $|x|\leq 2^{r}\tau+R, $ from where we have the inequality
$$ \chi_{B(e,R+2)}(x) \leq \frac{(1+2^{r}\tau+R)^{M}}{(1+|x|)^{M}} . $$
Consequently,
\begin{align*}
     &\tau^{-Q}\sum_{r=j_0+1}^{\infty} 2^{-rN}\chi_{B(e,2^{r}\tau+R)}(x)\sigma(K)\\
     &\lesssim  \tau^{-Q}\sum_{r=j_0+1}^{\infty} \frac{2^{-rN}(1+2^{r}\tau+R)^{M}}{(1+|x|)^{M}}\\
     &\lesssim_K \frac{1}{(1+|x|)^{M}} \sum_{r=j_0+1}^{\infty} 2^{j_0 Q} 2^{-rN}2^{r M}\tau^{M}\\
     &\sim  \frac{1}{(1+|x|)^{M}} \sum_{r=j_0+1}^{\infty} 2^{j_0 Q} 2^{-rN}2^{r M}2^{-j_0M}\\
     &\sim  \frac{1}{(1+|x|)^{M}} \sum_{r=j_0+1}^{\infty} 2^{-j_0(M-Q)} 2^{-r(N-M)}\\
     &\sim  \frac{2^{j_0(Q-Q_0)}}{(1+|x|)^{M}} \sum_{r=j_0+1}^{\infty} 2^{-j_0(M-Q+Q-Q_0)} 2^{-r(N-M)}.
\end{align*}Since $M-Q_0>0,$ and $N>M,$ we have that  $$\sum_{r=j_0+1}^{\infty} 2^{-j_0(M-Q_0)} 2^{-r(N-M)}\lesssim 1.$$
Putting all these estimates together, we have proved that
\begin{align*}    
     |\Phi_\tau*d\sigma(x)| &\lesssim_N \tau^{-Q}\smallint_{G}(1+\tau^{-1}|(xy^{-1})|)^{-N}d\sigma(y)\\
     & \lesssim \tau^{Q_0-Q}\chi_{B(e,R+2)}+  \frac{2^{j_0(Q-Q_0)}}{(1+|x|)^{M}}\\
     &\lesssim_{M}  \frac{\tau^{Q_0-Q}}{(1+|x|)^{M}}+  \frac{\tau^{Q_0-Q}}{(1+|x|)^{M}}=\frac{2\tau^{Q_0-Q}}{(1+|x|)^{M}}=\Psi(x).
\end{align*}Since $\Phi_\tau*d\sigma(x)$ has been dominated by the radially `decreasing function'\footnote{in the sense that when $|x|\leq |y|,$ $\Psi(x)\geq \Psi(y).$} $\Psi(x),$
 we have the inequality
\begin{equation}\label{Weak:1:1:inequality}
2\tau^{Q_0-Q} \sup_{t>0}|f\ast \Psi_t(x)|\lesssim   2\tau^{Q_0-Q} \mathscr{M}f(x),
\end{equation} as claimed. Indeed, \eqref{Weak:1:1:inequality} follows from the weak (1,1) type of the Hardy-Littlewood maximal function $\mathscr{M}$ on a homogeneous (nilpotent) Lie group, see Folland and Stein \cite[Chapter 2]{FollandStein1982}.

\subsection{The restricted weak estimate for $M_{F}^{d\sigma}$}\label{proof:subsection:full}
Let $A\subset G$ be a Borel measurable set and let $f=\chi_A$ be its characteristic function.  We are going to prove the estimate
\begin{align}
     \boxed{|\{x\in G: M_F^{d\sigma}f(x)>\lambda\}|\lesssim {\lambda^{-\frac{D}{D-c}}}|A|}
\end{align}where $D:=2a-\varepsilon_0+2c.$
\begin{proof}[Proof of Theorem \ref{Maximal:Function:Graded}] In view of \eqref{M:2:op:norm} and of \eqref{L1:estimate:m1}, 
 the weak (1,1) boundedness of $M_1$ together with the $L^2$-boundedness of $M_2$ provides the following estimates when $f=\chi_{A},$
\begin{equation}
    |\{x\in G: M_1f(x)>\frac{\lambda}{2}\}|\lesssim \frac{\tau^{Q_0-Q}}{\lambda}|A|
\end{equation} and 
\begin{equation}
\left|\left\{x\in G: M_2f(x)>\frac{\lambda}{2}\right\}\right|
\leq\smallint\limits_{\{x\in G: M_2f(x)>\frac{\lambda}{2}\}} \frac{M_2f(x)^2}{\lambda^2}dx
\lesssim \frac{\tau^{2a-\varepsilon_0}}{\lambda^2}|A|.
\end{equation} So, we have proved that
\begin{align*}
     |\{x\in G: M_F^{d\sigma}f(x)>\lambda\}|\lesssim \left(\frac{\tau^{Q_0-Q}}{\lambda}+\frac{\tau^{2a-\varepsilon_0}}{\lambda^2}\right)|A|.
\end{align*} Now, let us make an optimization procedure on the constant on the right-hand side.  Let us consider the function
\begin{align*}
    \varkappa(\tau):=\frac{\tau^{Q_0-Q}}{\lambda}+\frac{\tau^{2a-\varepsilon_0}}{\lambda^2}.
\end{align*}Let  $\lambda< 1.$ Since $Q-Q_0>0,$ we have that   
\begin{equation}
    \tau_0=\left(\frac{Q-Q_0}{2a-\varepsilon_0}\lambda\right)^{\frac{1}{2a-\varepsilon_0+Q-Q_0}},
\end{equation} is a critical point of $\varkappa(\tau),$ namely, it satisfies that $\varkappa'(\tau_0)=0.$ We also observe that   
\begin{equation}
     \varkappa(\tau_0)=C(a,\varepsilon_0,Q,Q_0)\lambda^{ -\frac{2a-\varepsilon_0+2c}{2a-\varepsilon_0+c} },\, c=Q-Q_0,
\end{equation} where the constant $C(a,\varepsilon_0,Q,Q_0)$ depends on $a,\varepsilon_0,Q,Q_0,$ but not on $\lambda.$
A direct computation shows that 
\begin{equation*}
    C(a,\varepsilon_0,Q,Q_0)=F^{\frac{2a-\varepsilon_0}{2a-\varepsilon_0+Q-Q_0}}+F^{\frac{Q_0-Q}{2a-\varepsilon_0+Q-Q_0}},\,F:=\frac{Q-Q_0}{2a-\varepsilon_0}.
\end{equation*}Moreover, 
\begin{equation}
    \varkappa''(\tau_0)=F^{\frac{2a-\varepsilon_0-2}{2a-\varepsilon_0+Q-Q_0}}(2a-\varepsilon_0)(2a-\varepsilon_0+Q-Q_0)>0,
\end{equation}showing that $\tau=\tau_0$ is a local minimum of $\varkappa(\tau).$ Because of the assumption $0<\lambda < 1,$ and since  $2a-\varepsilon_0+Q-Q_0>0,$ we can choose $\tau=\tau_0$ in our analysis above, that automatically will satisfy the condition $\tau^{-1}\gg 1$ imposed at the beginning. So, this choice of $\tau$ implies the estimate
\begin{align}\label{Weak:inequality:proof}
     |\{x\in G: M_F^{d\sigma}f(x)>\lambda\}|\lesssim {\lambda^{-\frac{D}{D-c}}}|A|,
\end{align}where $D=2a-\varepsilon_0+2c.$
We observe that only the inequality in \eqref{Weak:inequality:proof} when $\lambda< 1$ is relevant for the proof. Indeed,  by the Riesz-representation theorem we can write $d\sigma=Kdx,$ where $dx$ denotes the Haar measure on $G.$ In consequence, we have that
\begin{align*}
   & M_{F}^{d\sigma}(\chi_A)(x)\\
    &=\sup_{t>0}|\chi_A\ast K_t(x)|\leq \sup_{t>0}\Vert \chi_A\ast K_t \Vert_{L^\infty}\leq \Vert\chi_A\Vert_{L^\infty}\sup_{t>0}\Vert K_t\Vert_{L^1}=\Vert K\Vert_{L^1(G)}=\sigma(G).
\end{align*}Without loss of generality if we consider the measure $d\sigma$ to be normalised, then we have the inequality $M_{F}^{d\sigma}(\chi_A)(x)\leq 1.$ Consequently if $\lambda>1,$ $|\{x\in G: M_F^{d\sigma}f(x)>\lambda\}|=0.$
The estimate \eqref{Weak:inequality:proof}  proves that $M_F^{d\sigma}$ is of restricted weak $(\frac{D}{D-c},1)$ type. The proof of Theorem \ref{Maximal:Function:Graded} is complete.
\end{proof}

\section{Final remarks}\label{FinalRemarks}
\subsubsection{About our main result} This paper aims to extend the approach of Bourgain to analyse the boundedness of the full maximal function in a class of homogeneous nilpotent Lie groups, admitting the existence of left-invariant hypoelliptic partial differential operators, and then we assume our group to be a graded Lie group in view of the Helffer and Nourrigat solution in \cite{HelfferNourrigat} of the Rockland conjecture. Our main result is the restricted weak $(p,1)$ estimate presented in Theorem \ref{Maximal:Function:Graded}.

\subsubsection{Applications: differentiability of functions on graded Lie groups}  We also note, as expected, that the boundedness properties of maximal functions, and in particular our main Theorem \ref{Maximal:Function:Graded}, have an immediate consequence in the differentiations of functions in the Lebesgue spaces $L^p(G)$, extending the classical Stein differentiation theorem, see Remark \ref{differentiation:th}. An excellent introduction to this topic is the one given by  Stein in \cite{Stein1970}.

\subsubsection{Relation of our approach with the global quantisation on graded Lie groups}
As expected, there arise several difficulties in proving our main Theorem \ref{Maximal:Function:Graded}. One of the main obstacles is to deal with the non-commutative setting of nilpotent Lie groups $G$. In this setting, the Fourier transform is operator-valued taking values at the unitary and irreducible representations of the group, that are elements of the unitary dual $\widehat{G}$. 
Nevertheless, here we follow one of the recent developments of the theory of pseudo-differential operators on nilpotent Lie groups, for instance, the {\it global quantisation on graded Lie groups}. 

According to this research perspective the symbol classes and the boundedness properties of operators can be formulated in terms of criteria involving orders measured by the symbols of Rockland operators. Indeed, this approach has been developed recently in the theory of pseudo-differential operators of Fischer and Ruzhansky \cite{FischerRuzhanskyBook}, as a global extension to the local approach developed by H\"ormander \cite{HormanderBook34}. For the aspects of the global theory of pseudo-differential operators on compact Lie groups we refer to Ruzhansky, Turunen and Wirth \cite{Ruz, RuzhanskyWirth2014} and to \cite{CardonaThesis,CerejeirasFerreiraKahlerWirth}. 

So, motivated by this new research perspective that by itself is a source of open problems, particularly in harmonic analysis and PDE, in our analysis of the full maximal function, we have formulated our hypotheses for a class of measures whose Fourier transform is closely related to the H\"ormander symbol classes developed in \cite{FischerRuzhanskyBook}, see Definition \ref{Admissible:measure}. Then, a criterion on the corresponding measure $d\sigma$ has been formulated in Theorem  \ref{Maximal:Function:Graded} in order to guarantee the boundedness of $M_F^{d\sigma}$

\subsubsection*{Conflict of interests statement - Data Availability Statements}  The  author states that there is no conflict of interest.  Data sharing does not apply to this article as no datasets were generated or
analysed during the current study.

\subsection*{Acknowledgements} The author is deeply indebted to Michael Ruzhansky and Julio Delgado for the support, encouragement, and guidance given over the years; also for several discussions related to the harmonic analysis on graded Lie groups.  The author is currently a FWO fellow supported by the Research Foundation Flanders FWO.

\bibliographystyle{amsplain}

\end{document}